\setlist[itemize]{topsep=0ex,itemsep=0ex,parsep=0.4ex}
\setlist[enumerate]{topsep=0ex,itemsep=0ex,parsep=0.4ex}
\declaretheorem[name = Theorem, numberwithin = section, style = plain]{theorem}
\declaretheorem[name = Lemma, numberlike = theorem, style = plain]{lemma}
\declaretheorem[name = Proposition, numberlike = theorem, style = plain]{proposition}
\renewcommand{\epsilon}{\varepsilon}
\renewcommand{\geq}{\geqslant}
\renewcommand{\leq}{\leqslant}
\DeclarePairedDelimiter{\abs}{\lvert}{\rvert} % Absolute value. \abs* does auto-sizing, \abs[\big] etc does fixed sizing
\DeclarePairedDelimiter{\set}{\lbrace}{\rbrace} % Set. \set* does auto-sizing, \set[\big] etc does fixed sizing
\DeclareMathOperator{\im}{Im}
\newcommand{\defn}[1]{\textcolor{Maroon}{\emph{#1}}}
\renewcommand{\AA}{\mathcal{A}}
\newcommand{\BB}{\mathcal{B}}
\newcommand{\CC}{\mathcal{C}}
\newcommand{\RR}{\mathbb{R}}
\newcommand{\OO}{\mathcal{O}}
\newcommand{\eps}{\varepsilon}
\begin{document}

\title{\bf Non-Homotopic Drawings of Multigraphs}
\author{Ant\'onio Gir\~ao\footnotemark[1] \qquad Freddie Illingworth\footnotemark[2] \qquad Alex Scott\footnotemark[1] \qquad  David R.\ Wood\footnotemark[3]}

\maketitle

\begin{abstract}
    A multigraph drawn in the plane is \defn{non-homotopic} if no two edges connecting the same pair of vertices can be continuously deformed into each other without passing through a vertex, and is \defn{$k$-crossing} if every pair of edges (self-)intersects at most $k$ times. We prove that the number of edges in an $n$-vertex non-homotopic $k$-crossing multigraph is at most $6^{13 n (k + 1)}$, which is a substantial improvement over previous upper bounds.

    We also study this problem in the setting of \defn{monotone} drawings where every edge is an x-monotone curve. We show that the number of edges, $m$, in such a drawing is at most $2 \binom{2n}{k + 1}$ and the number of crossings is $\Omega\bigl(\frac{m^{2 + 1/k}}{n^{1 + 1/k}}\bigr)$. For fixed $k$ these bounds are both best possible up to a constant multiplicative factor. 
\end{abstract}

\renewcommand{\thefootnote}{\fnsymbol{footnote}} % Make affiliation marks symbols

\footnotetext[1]{Mathematical Institute, University of Oxford, United Kingdom (\texttt{a.girao@ucl.ac.uk}, \texttt{alexander\allowbreak.scott\allowbreak @maths\allowbreak.ox\allowbreak.ac\allowbreak.uk}). Research supported by EPSRC grant EP/V007327/1.}

\footnotetext[2]{Department of Mathematics, University College London, United Kingdom (\texttt{f.illingworth@ucl.ac.uk}). Research supported by EPSRC grant EP/V521917/1 and the Heilbronn Institute for Mathematical Research.}

\footnotetext[3]{School of Mathematics, Monash University, Melbourne, Australia  (\texttt{david.wood@monash.edu}). Research supported by the Australian Research Council and a Visiting Research Fellowship of Merton College.}

\renewcommand{\thefootnote}{\arabic{footnote}} % Returns to numbered footnotes

\section{Introduction}\label{sec:intro}

A central question of graph drawing research asks for the maximum number of edges in an $n$-vertex graph that admits a drawing satisfying certain properties. For example, it follows from Euler's formula that planar graphs have at most $3n-6$ edges (for $n \geq 3$), which is tight for planar triangulations. The situation for multigraphs (allowing parallel edges and self-loops) is less clear, since there are planar multigraphs with one vertex and arbitrarily many edges. To avoid this, the constraint that the drawing is \defn{non-homotopic} is typically applied: a drawing is non-homotopic if no pair of parallel edges or loops can be continuously deformed into each other without passing through vertices. The planar condition is also frequently relaxed so that the drawing is \defn{$k$-crossing}: the number of crossings between each pair of edges is at most $k$ and the number of self-intersections of each edge is at most $k$. Note that some bound on the number of crossings is required as there are non-homotopic drawings on a fixed number of vertices with arbitrarily many edges where different edges wind round the same vertex increasingly many times, as illustrated in \cref{ThreeVerticesManyEdges}.

\begin{figure}[ht]
\centering
\includegraphics{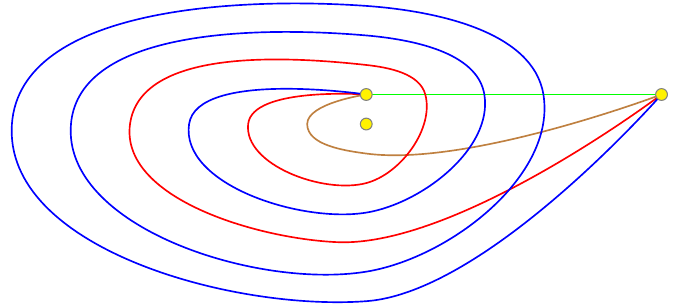}
\caption{Non-homotopic edges (with no self-crossings) winding around the same vertex increasingly many times.}
\label{ThreeVerticesManyEdges}
\end{figure}

In 1996, \citet*[Thm.~3.5]{JMM96} proved that $n$-vertex multigraphs with a non-homotopic $k$-crossing drawing have a bounded number of edges. Following this, the maximum cardinality of a collection of arcs that is non-homotopic and $k$-crossing on various surfaces has been well-studied. See, for example, the papers \cite{ABG19,AS18,BarNatan20,Douba22,FPS21,Greene19,MRT14,PTT22,Przytycki15,SP19}.

The first result of this paper is an improved upper bound on the number of edges in a non-homotopic $k$-crossing drawing of a multigraph.

\begin{theorem}
    \label{NonHomotopic}
    Let $n$, $k$ be positive integers. Let $G$ be an $n$-vertex multigraph that has a non-homotopic $k$-crossing drawing. Then
    \begin{equation*}
        \abs{E(G)} \leq 6^{13 n (k + 1)}.
    \end{equation*}
\end{theorem}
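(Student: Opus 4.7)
I would first reduce to bounding the number $m(u, v)$ of pairwise non-homotopic edges between a fixed pair $\{u, v\}$ (permitting $u = v$ for self-loops), then sum over pairs: $\abs{E(G)} \le \binom{n + 1}{2} \max_{u, v} m(u, v)$. Fix such a pair, set $V' = V(G) \setminus \{u, v\}$, and let $e_1, \ldots, e_m$ be the edges between $u$ and $v$. Each $e_i$ represents a homotopy class of paths from $u$ to $v$ in $\RR^2 \setminus V'$; after choosing $e_1$ as a basepath, these classes are in bijection with the free group $F_{n - 2}$ (for self-loops one works modulo conjugacy). Since pairwise non-homotopic edges give distinct elements, the task reduces to bounding the number of elements of $F_{n - 2}$ realisable in a $k$-crossing drawing.

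For a concrete encoding I would choose pairwise disjoint simple arcs from each vertex of $V'$ to a common point at infinity, in general position relative to the drawing, and assign to each $e_i$ the reduced word $w_i \in F_{n - 2}$ obtained by reading off the signed arc-crossings along $e_i$. The main obstacle is controlling the reduced word length $\abs{w_i}$: a priori an edge can wind arbitrarily many times around a puncture and yield an arbitrarily long word, so the non-homotopy and $k$-crossing hypotheses must be exploited jointly. The guiding intuition is that a long reduced subword in $w_i$ corresponds to $e_i$ winding many times around some subset $S \subseteq V'$; any other non-homotopic edge has to wind differently around $S$, forcing many crossings in an annular neighbourhood of $S$. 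The elementary fact that a closed curve of winding number $w$ around a puncture has at least $\abs{w} - 1$ self-intersections already shows that the abelian winding numbers of any two edges differ by $O(k)$, and a more delicate bigon-removal plus pigeonhole argument on winding patterns around subsets of $V'$ should extend this to a bound $\abs{w_i} \le c \, n (k + 1)$ for an absolute constant $c$.

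With this word-length bound, the number of possible $w_i$ is at most the number of reduced words of length at most $c \, n (k + 1)$ in $F_{n - 2}$, which is $\le (2(n - 2))^{c \, n (k + 1)}$. To match the clean base $6$ in the theorem (independent of $n$), the encoding would be refined to use an effectively constant-size alphabet — for example by replacing each arc-crossing symbol by a bounded local combinatorial label relative to a triangulation of the punctured plane, so each of the $O(n(k+1))$ events contributes only a constant factor. Multiplying by the $\binom{n + 1}{2}$ pairs then yields the required $6^{13 n (k + 1)}$. The step I expect to be hardest is the word-length bound itself: converting the combined topological hypotheses of non-homotopy and $k$-bounded pairwise and self-crossings into a genuine algebraic estimate on the complexity of each edge in the free group.
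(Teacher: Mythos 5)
Your high-level reduction matches the paper's first step: restrict to edges between one fixed pair $\{x,y\}$, work in the $(n-2)$-punctured plane, and encode each edge by a combinatorial object that determines its homotopy class. But the two places you flag as the ``hardest steps'' are precisely where all the content lies, and the arguments you sketch for them do not work as stated; they are also quite different from what the paper actually does.

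First, the word-length bound. If you pick abstract disjoint arcs from each puncture to infinity ``in general position,'' there is no a priori bound on the number of raw crossings of an edge $e_i$ with these arcs, so you really do need to control the \emph{reduced} word length. Your only stated tool is that the abelian winding number of $e_i \cup e_j^{-1}$ about each puncture is $O(k)$, by the self-intersection argument. But that controls only the exponent sum of each generator in $w_i w_j^{-1}$, not $\abs{w_i}$: elements of $F_{n-2}$ with tiny abelianisation can have long reduced words, and the reduction from ``exponent sums are $O(k)$'' to ``reduced word length is $O(n(k+1))$'' is essentially the entire theorem in your formulation. The phrase ``bigon-removal plus pigeonhole on winding patterns around subsets of $V'$'' names a goal, not a proof; it is not clear how to make it run, and it would in any case require a quantitative statement about simple and low-self-crossing arcs in punctured surfaces that you have not supplied. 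The paper does something structurally different and avoids the issue: it constructs the reference curves (the ``frame'' $F$) out of portions of the actual edges of $G$ together with short perpendicular foot-segments to the vertices. Because the frame is literally made of pieces of $O(n)$ edges, the $k$-crossing hypothesis immediately implies that every remaining edge crosses the frame at most $O(nk)$ times. No homotopy-theoretic word-length lemma is needed; the crossing bound is built in by construction.

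Second, the constant base. Even granting $\abs{w_i} \le cn(k+1)$, counting reduced words over the alphabet of size $2(n-2)$ gives $(2(n-2))^{cn(k+1)} = 6^{O(nk\log n)}$, which overshoots the target $6^{O(nk)}$. You propose ``replacing each arc-crossing symbol by a bounded local combinatorial label relative to a triangulation,'' but this is again the hard part, not a routine refinement: you have to produce a frame/triangulation of bounded total size in which, from any cell and any side of that cell, there are only $O(1)$ ways an edge can exit. The paper achieves this by iteratively adding chords (again drawn along actual edges of $G$, to keep the crossing bound) until every frame edge ``sees'' at most nine others across any face, and uses a planar-graph lemma (all faces of length $\geq 7 \Rightarrow$ few vertices) to cap the total frame size at $O(n)$ and the number of added chords at $O(n)$. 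The outer face still needs winding numbers, but now they are controlled by the self-crossing bound and contribute only a $4^{\ell}\binom{k+\ell+1}{k}$ factor.

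So: the framework is right, and you correctly locate the two obstacles. But the abelian-winding argument does not bound reduced word length, and ``choose a triangulation'' does not give a constant branching factor; both need the frame-from-the-drawing idea, which is the real engine of the proof and is absent from the proposal.
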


\Cref{NonHomotopic} improves on results from the literature, as we now explain. 
For integers $n, k \geq 1$, let $S$ denote the set obtained from $\mathbb{R}^2$ by removing $n$
distinct points, and fix $x\in S$. Let $f(n, k)$ be the maximum number of pairwise non-homotopic $x$-loops in $S$ such that none of them passes through $x$, each of them has fewer than $k$ self-intersections, and every pair of them cross fewer than $k$ times. 
The argument of \citet*{JMM96} shows that
\begin{equation}\label{eq:fUpperBound} 
   f(n, k) < (nk)^{\OO(nk^2)}.
\end{equation}
\Cref{NonHomotopic} (in the special case that every edge is a loop incident to a single vertex) improves the upper bound on $f(n, k)$ to
\begin{equation*}
    f(n, k) \leq 6^{13 n k}. 
\end{equation*}
Note that \citet*{PTT22} proved the exponential lower bound, 
\begin{equation}\label{eq:PTTLB}
    f(n, k) \geq 2^{\sqrt{nk}/3}.
\end{equation}
for $k \geq n \geq 2$.

Another common constraint applied to graph drawings is monotonicity: a drawing is \defn{monotone} if every edge is an x-monotone curve. See \citep{PT12,FPT10,PT04,PS11,FIKKMS16} for previous work on crossings in monotone drawings. The second direction of this paper is to obtain a bound on the number of edges in a multigraph with a monotone non-homotopic $k$-crossing drawing, and to prove an associated crossing lemma. We also derive the corresponding results when every pair of parallel edges cross at most $k$ times and every pair of incident edges cross at most $k$ times (a pair of edges is \defn{incident} if they share at least one common end-vertex). All of these results have optimal dependence on $n$.

\begin{theorem}\label{MonotoneNonHomotopic}
    Let $k < n$ be non-negative integers. For every $n$-vertex multigraph $G$ and every monotone non-homotopic drawing $D$\textup{:}
    \begin{enumerate}[label = \textup{(}\alph{*}\textup{)}]
        \item If each pair of edges cross at most $k$ times in $D$, then $\abs{E(G)} \leq 2 \cdot \binom{2 n}{k + 1}$. \label{edge:all}
        \item If each pair of incident edges cross at most $k$ times in $D$, then $\abs{E(G)} \leq n \cdot \binom{2 n}{k + 1}$. \label{edge:incident}
        \item If each pair of parallel edges cross at most $k$ times in $D$, then $\abs{E(G)} \leq n(n - 1) \cdot \binom{2 n}{k + 1}$. \label{edge:parallel}
    \end{enumerate}
\end{theorem}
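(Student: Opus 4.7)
My plan is to prove part (a) directly and deduce (b) and (c) from it. After a small perturbation, assume the vertices have distinct $x$-coordinates and crossings are transversal. For an edge $e$, let $\ell(e), r(e)$ be its left and right endpoints; these are distinct since $e$ is $x$-monotone (so, in particular, edges cannot self-intersect). For each vertex $w$ with $x_{\ell(e)} < x_w < x_{r(e)}$, the curve $e$ meets the vertical line $x = x_w$ at a single point, either above or below $w$; record this as $p_e(w) \in \{A, B\}$. A standard fact about the fundamental group of the plane minus $V \setminus \{\ell(e), r(e)\}$ implies that two $x$-monotone curves from $\ell(e)$ to $r(e)$ are homotopic (avoiding the other vertices) if and only if they induce the same pattern. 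Thus in a non-homotopic drawing each edge is determined by the triple $(\ell(e), r(e), p_e)$.

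For (a), my aim is to construct an injection $\Phi \colon E(G) \to \binom{V \times \{A, B\}}{k + 1} \times \{1, 2\}$, whose target has cardinality $2\binom{2n}{k + 1}$. The underlying combinatorial lemma driving the construction should say: if $e, e'$ cross at most $k$ times, then the signed disagreement sequence of $p_e, p_{e'}$ along their common $x$-range has at most $k$ sign alternations, where the sign at a vertex $w$ with $p_e(w) \neq p_{e'}(w)$ is that of $y_e(x_w) - y_{e'}(x_w)$. This holds because $y_e - y_{e'}$ changes sign only at crossings, giving at most $k + 1$ maximal intervals of constant sign. Building on this, a left-to-right sweep of the events from $V \times \{A, B\}$ should extract for each edge a canonical set of $k + 1$ vertex-side pairs that pin it down, with the $\{1, 2\}$ factor absorbing a binary ambiguity in the construction (e.g.\ the starting sign, or sweeping from the leftmost versus the rightmost end).

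Parts (b) and (c) then follow by restriction and summing. For (b), fix a vertex $v$ and consider the subdrawing on the edges incident to $v$: these are pairwise incident, hence pairwise $k$-crossing, so (a) bounds their number by $2\binom{2n}{k + 1}$; summing over $v$ and dividing by $2$ (each edge is incident to both endpoints) yields $|E(G)| \leq n\binom{2n}{k + 1}$. For (c), fix an unordered pair $\{u, v\}$ and consider the subdrawing on the edges between $u$ and $v$: these are pairwise parallel, hence pairwise $k$-crossing, so (a) gives at most $2\binom{2n}{k + 1}$ such edges; summing over the $\binom{n}{2}$ pairs yields $|E(G)| \leq n(n - 1)\binom{2n}{k + 1}$. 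The genuinely hard step is the construction of $\Phi$ for (a): within a single endpoint pair the disagreement lemma above gives strong control on how patterns can differ, but edges with different endpoint pairs live on different $x$-intervals, so I expect to need a sweep-line argument that treats all $2n$ events in $V \times \{A, B\}$ uniformly and assigns to each edge $k + 1$ canonical events that witness its distinctness from every other edge in the drawing, up to the allowed factor of $2$.
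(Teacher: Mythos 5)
Your setup is sound and matches the paper's approach: encoding each edge by its above/below pattern with respect to the intermediate vertices (your $p_e(w)\in\{A,B\}$ is the paper's $+/-$), observing that in a monotone drawing the homotopy class is determined by endpoints plus this pattern, and deducing parts (b) and (c) from (a) by restricting to a vertex $x$ or a pair $\{x,y\}$ and summing. Your ``disagreement lemma'' (at most $k$ crossings between $e,e'$ forces at most $k$ sign alternations in $y_e - y_{e'}$ along the common $x$-range) is also correct and is exactly the abstract crossing condition the paper works with on its sequences.

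The gap is that you have not proved the counting statement, which is the substantive content of part (a). You correctly flag the construction of $\Phi\colon E(G)\to\binom{V\times\{A,B\}}{k+1}\times\{1,2\}$ as ``the genuinely hard step,'' but then only say a left-to-right sweep ``should extract'' a canonical witness set of size $k+1$, with the factor $2$ ``absorbing a binary ambiguity.'' This is a statement of intent, not an argument; it is not at all clear how to make such an assignment injective when edges with different endpoint pairs (hence different $x$-ranges and different $\ast$-positions) must receive distinct witness sets. The paper does not construct a direct injection into $k+1$-subsets. Instead it abstracts the patterns into ``monotone drawing sequences'' over $\{+,0,-,\ast\}$ of length $n$, defines $g(n,k)$ as the maximum size of a pairwise $k$-crossing family, and proves the recurrence $g(n+1,k)\le g(n,k)+2g(n,k-1)+2$ by deleting the first coordinate and carefully bounding collisions: the only sequence destroyed by deletion is $0,\ast,\dotsc,\ast$; the overlap $\AA'_\ast\cap(\AA'_+\cup\AA'_0\cup\AA'_-)$ has size at most $1$; and crucially, sequences in $\AA'_+\cap(\AA'_0\cup\AA'_-)$ pairwise cross at most $k-1$ times, because the deleted first entries would contribute an extra crossing. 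The bound $g(n,k)\le 2\binom{2n}{k+1}$ then follows by a two-variable induction with base cases at $k=-1$, $n=1$, $n=2$. To complete your proof you would need either to supply this recurrence argument (or an equivalent), or to actually exhibit the injection $\Phi$ and verify injectivity, neither of which your sketch does.
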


The condition $k < n$ is fully general: if an $n$-vertex multigraph $G$ has a monotone drawing, then it has a homotopically equivalent monotone drawing in which every pair of edges cross fewer than $n$ times.\footnote{First project the monotone drawing to one that is homotopically equivalent where the vertices of $G$ lie at the points $(1, 0)$, $(2, 0)$, \ldots, $(n, 0)$ on the x-axis. For an edge $e$ in this drawing from $p_{x_1} = (x_1, 0)$ to $p_{x_2} = (x_2, 0)$, let $p_j$ ($j = x_1 + 1, \dotsc, x_2 - 1$) be the point where $e$ has x-coordinate $j$. Let $\tilde{e}$ be the piecewise linear curve going through $p_{x_1}, p_{x_1 + 1}, \dotsc, p_{x_2}$. The new drawing of $G$ where every edge $e$ is replaced by $\tilde{e}$ is homotopically equivalent to the original. Note that for any integer $a$, edges $\tilde{e}_1$ and $\tilde{e}_2$ are straight lines (or single points or not present) when restricted to the region $a \leq x \leq a + 1$ and so cross at most once in this region. Hence, every pair of edges cross at most $n - 1$ times.} In \cref{sec:MonotoneExample} we give examples that show the dependence upon $n$ is best possible in all three parts.

One motivation for such extremal theorems is that they are essential ingredients in proofs of crossing number lower bounds. The famous `Crossing Lemma' of \citet*{Ajtai82} and \citet{Leighton83} says that every drawing of any $n$-vertex simple graph (with no parallel edges and no loops) with $m \geq 4n$ edges has $\Omega\bigl(\frac{m^3}{n^2}\bigr)$ crossings. \Citet{Szekely97} proved an analogous result for multigraphs where the lower bound depends on the edge multiplicity. Recent research has shown that under certain assumptions, no dependence on the multiplicity is needed \citep{KPTU21,PT20,PTT22}. Similarly, using \cref{MonotoneNonHomotopic}, we prove the following three tight lower bounds on the number of crossings in monotone non-homotopic drawings where each pair of parallel edges/incident edges/edges cross at most $k$ times.

\begin{restatable}{theorem}{MonoNonHomCombined}\label{MonoNonHomCombinedCrossingLowerBound}
    For every non-negative integer $k$ there are positive constants $\alpha_k$, $\beta_k$, and $\gamma_k$ such that the following holds. For every $n$-vertex $m$-edge multigraph $G$ with $m \geq 4n$ and every monotone non-homotopic drawing $D$ of $G$\textup{:}
    \begin{enumerate}[label = \textup{(}\alph{*}\textup{)}]
        \item If each pair of edges cross at most $k$ times in $D$, then $D$ has at least $\alpha_k \frac{m^{2 + 1/k}}{n^{1 + 1/k}}$ crossings. \label{crossing:all}
        \item If each pair of incident edges cross at most $k$ times in $D$, then $D$ has at least $\beta_k \frac{m^{2 + 1/(k + 1)}}{n^{1 + 1/(k + 1)}}$ crossings. \label{crossing:incident}
        \item If each pair of parallel edges cross at most $k$ times in $D$, then $D$ has at least $\gamma_k \frac{m^{2 + 1/(k + 2)}}{n^{1 + 1/(k + 2)}}$ crossings. \label{crossing:parallel}
    \end{enumerate}
\end{restatable}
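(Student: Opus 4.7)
The plan is to follow the classical Chazelle--Sharir--Welzl random sampling argument, using \cref{MonotoneNonHomotopic} as the base edge bound. I describe the argument for part \ref{crossing:all}; parts \ref{crossing:incident} and \ref{crossing:parallel} should be analogous, invoking \cref{edge:incident} and \cref{edge:parallel} respectively (with the corresponding extremal exponents).

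The first step is a linear lower bound on crossings. Applying \cref{edge:all} with $k = 0$ gives $\abs{E(G)} \leq 4n$ for every crossing-free monotone non-homotopic drawing. A standard greedy edge-deletion argument---iteratively remove an edge involved in a crossing (destroying at least one crossing per removal), and terminate at a crossing-free sub-drawing of at most $4n$ edges---yields $\CR(D) \geq m - 4n$ for every monotone non-homotopic drawing $D$ of an $n$-vertex $m$-edge multigraph.

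The second step is random vertex sampling. I sample each vertex of $G$ independently with probability $p \in (0,1]$, obtaining a random induced sub-drawing $D[V_p]$ that inherits the monotone, non-homotopic, and $k$-crossing properties. Applying the linear bound to $D[V_p]$ and taking expectations,
\[
    \EX[\CR(D[V_p])] \geq p^2 m - 4 p n.
\]
The left-hand side equals $\sum_{\text{crossings}} p^{s}$, where $s \in \{2,3,4\}$ is the number of distinct endpoints of the crossing pair; decomposing $\CR(D) = c_2 + c_3 + c_4$ accordingly gives $\EX[\CR(D[V_p])] = p^2 c_2 + p^3 c_3 + p^4 c_4$. I then use the $k$-crossing hypothesis to bound $c_2$ and $c_3$ (each is at most $k$ times the number of parallel, respectively incident, edge pairs, which is in turn controlled via \cref{edge:all}) and optimize $p$ to balance the resulting terms, yielding the target $\alpha_k m^{2+1/k}/n^{1+1/k}$.

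The main obstacle is the multigraph character of the drawings. For a simple graph every crossing involves four distinct vertices, so $\EX[\CR(D[V_p])] = p^4 \CR(D)$ and the classical Crossing Lemma gives the stronger $\CR(D) \gtrsim m^3/n^2$. Here, parallel and incident crossings sample at the weaker rates $p^2$ and $p^3$, which naively would degrade the bound all the way down to the linear $\CR(D) \gtrsim m - 4n$. The $k$-crossing hypothesis, combined with the edge bound from \cref{MonotoneNonHomotopic}, is precisely what controls this: if parallel or incident crossings make up a large fraction of $\CR(D)$, then the number of parallel or incident edge pairs must be correspondingly large, which in turn forces $\CR(D)$ to be large. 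This interplay produces the exponent $2 + 1/k$ rather than the simple-graph $3$; the exponents $2 + 1/(k+1)$ and $2 + 1/(k+2)$ in parts \ref{crossing:incident} and \ref{crossing:parallel} reflect the weaker $k$-crossing restrictions there.
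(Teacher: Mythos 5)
The paper does not use the probabilistic deletion method; it instead verifies that the drawing styles $M_k$, $N_k$, $O_k$ are edge-deletion-closed, split-compatible, and admit drawings whose planarisation is separated (\cref{Dk}), and then applies a black-box crossing lemma of Kaufmann, Pach, T\'oth, and Ueckerdt~\citep{KPTU21} (\cref{GeneralCrossingLemma}) based on the \emph{bisection-width} method. Your proposal takes a genuinely different route, and I believe it has two real gaps.

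First, the induced sub-drawing $D[V_p]$ need not be non-homotopic. Homotopy is taken relative to the punctures at the remaining vertices, so deleting a vertex $v_\ell$ can collapse two previously non-homotopic parallel edges (say, one passing above $v_\ell$ and one passing below, with no crossing between them) into the same homotopy class. Your linear lower bound $\CR \geq m - 4n$ is correct for non-homotopic drawings, but it cannot be applied to $D[V_p]$, so the step $\EX[\CR(D[V_p])] \geq p^2 m - 4pn$ does not follow.

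Second, and more fundamentally, the $p^2$ and $p^3$ scaling of crossings with few distinct endpoints cannot be repaired in the way you sketch. You have $\EX[\CR(D[V_p])] = p^2 c_2 + p^3 c_3 + p^4 c_4$, and the only useful regime is $p \asymp n/m$; but if $c_2$ (or $c_3$) is a constant fraction of $\CR(D)$, then after subtracting $p^2 c_2$ from the right-hand side the inequality degenerates to the linear bound. You propose to save this by arguing that large $c_2$ or $c_3$ forces $\CR(D)$ to be large outright, via the $k$-crossing hypothesis and the edge bounds; but $c_2 \leq k \cdot (\text{number of parallel pairs})$ and $c_3 \leq k \cdot (\text{number of incident pairs})$ are \emph{upper} bounds, not lower bounds, so they do not force $\CR(D)$ large, and the number of parallel/incident pairs is not controlled in the direction you need. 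I do not see how to close this dichotomy with an absolute constant $\alpha_k$: the threshold at which ``large $c_2$'' gives the target scales with $(m/n)^{1+1/k}$, which is not bounded by a function of $k$ alone. This is precisely the obstruction that makes the probabilistic method unsuitable for multigraph crossing lemmas and explains why \citep{PT20,KPTU21} (and this paper) use the bisection-width framework instead. If you want to pursue a counting proof, you would essentially need to rederive \cref{GeneralCrossingLemma}, whose key hypothesis (that the planarisation can be made separated) is exactly what the rerouting argument in \cref{Dk} provides.
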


In \cref{sec:MonotoneExample} we give an example to show that the results in \cref{MonoNonHomCombinedCrossingLowerBound} are best possible up to the values of $\alpha_k$, $\beta_k$, and $\gamma_k$.

%%%%%%%%%%%%%%%%%%%%%%%%%%%%%%%%%%

We now compare \cref{MonotoneNonHomotopic,MonoNonHomCombinedCrossingLowerBound} to related results in the literature. Here we conflate a vertex or edge with its image in a drawing.
\Citet*{PT20} and \citet*{KPTU21} defined a drawing of a multigraph to be:
\begin{itemize}
    \item \defn{separated} if any two parallel edges do not cross and the ``lens'' formed by their union has at least one vertex in its interior and at least one vertex in its exterior;
    \item \defn{single-crossing} if any pair of edges cross at most once;
    \item \defn{locally star-like} if no two edges with a common end-vertex cross.
\end{itemize}

Note that every separated drawing is non-homotopic but there are non-homotopic drawings that are not separated, since parallel edges might cross in a non-homotopic drawing. Also note that locally star-like is the same as each pair of incident edges cross at most $0$ times.

\Citet{PT20} defined a drawing to be \defn{branching} if it is separated, single-crossing, and locally star-like. They proved that the number of edges, $m$, in a branching drawing on $n$ vertices is at most $n(n - 2)$ and the number of crossings is $\Omega\bigl(\frac{m^3}{n^2}\bigr)$. \Citet*{KPTU21} proved that these results still hold without the single-crossing assumption. \Cref{MonotoneNonHomotopic}\ref{edge:incident} and \Cref{MonoNonHomCombinedCrossingLowerBound}\ref{crossing:incident} with $k = 0$ match these bounds (up to multiplicative constants), replacing the separated assumption by the weaker non-homotopic assumption and adding the monotonicity assumption. 
\Citet*{KPTU21} conjectured that every separated single-crossing (but not necessarily locally star-like) drawing on $n$ vertices has $\OO(n^2)$ edges. \Citet*{FPS21} proved a $\OO(n^2\log n)$ upper bound in this case. \Cref{MonotoneNonHomotopic}\ref{edge:all} in the $k = 1$ case gives a better bound on the number of edges without the assumption that parallel edges do not cross, and with the extra monotonicity assumption. 

The rest of the paper is structured as follows. We introduce formal notation in \cref{sec:notation}. In \cref{sec:nonhomotop} we prove \cref{NonHomotopic}. The examples showing the tightness of \cref{MonotoneNonHomotopic,MonoNonHomCombinedCrossingLowerBound} are given in \cref{sec:MonotoneExample}. We prove \cref{MonotoneNonHomotopic} in \cref{sec:mono} and \cref{MonoNonHomCombinedCrossingLowerBound} in \cref{sec:crossing}. We finish with open problems in \cref{sec:conc}.

We finish this introduction with a brief outline of the combinatorial idea appearing in the proof of \cref{NonHomotopic} (the proof of \cref{MonotoneNonHomotopic} is a much simpler version of this). Suppose $G$ is an $n$-vertex multigraph drawn in the plane. We will define a `frame' for $G$. This will be a small set of arcs (that are sub-curves of edges of $G$) and points in the plane such that if two edges of $G$ `interact' with the frame in the same way, then those edges are homotopic. We will record how an edge interacts with the frame using a signature: a list of the arcs of the frame that the edge crosses as well as some winding numbers around some points of the frame. We will record the signature in such a way that if two edges have the same signature, then they are homotopic. 
Thus, if the drawing is non-homotopic, then the number of edges is at most the number of possible signatures. Finally we will show that if the drawing is $k$-crossing, then the number of possible signatures may be bounded from above by a function of $n$ and $k$.

\paragraph{Note added in revision.} The referee of this paper pointed out the important work of \citet[Thm.~1.2 \& 1.5]{Przytycki15}, which 
predates references \cite{PT20,KPTU21,FPS21}, and 
implies the above-mentioned conjecture in \cite{KPTU21} as well as \cref{MonotoneNonHomotopic}  (with slightly less precise bounds). In fact, the result of \citet{Przytycki15} is significantly more general than \cref{MonotoneNonHomotopic} (replacing the monotone condition by a weaker assumption, and it works on any surface).
The topological proof of \citet{Przytycki15} is completely different to our combinatorial proof. Also, our construction of monotone drawings in \cref{sec:MonotoneExample} has a similar flavour to (but more edges than) a construction of \citet[Example~4.1]{Przytycki15}. 

\subsection{Notation}\label{sec:notation}

A \defn{drawing} of a multigraph $G$ is a function $\phi$ that maps each vertex $v\in V(G)$ to a point $\phi_v \in \RR^2$ and maps each edge $e = vw\in E(G)$ to a continuous curve $\phi_e \colon [0, 1] \to \RR^2$ with endpoints $\phi_v$ and $\phi_w$, such that:
\begin{itemize}
    \item every vertex is represented by a different point; that is, the points $(\phi_v \colon v \in V(G))$ are distinct;
    \item no edge passes through any vertex; that is, $\phi_e(t) \neq v$ for all $v \in V(G)$, $e \in E(G)$, and $t \in (0, 1)$;
    \item each pair of edges cross at a finite number of points, where a \defn{crossing} is an unordered pair of distinct pairs $(e, s), (f, t) \in E(G) \times (0, 1)$ with $\phi_e(s) = \phi_f(t)$.
\end{itemize}

Note that if $k$ edges meet at a point, then this contributes $\binom{k}{2}$ crossings. Parallel edges $e,f\in E(G)$ with end-vertices $v,w\in V(G)$ are \defn{homotopic} with respect to a drawing $\phi$ if there is a homotopy of $\mathbb{R}^2\setminus\set{\phi_x \colon x \in V(G) \setminus \set{v,w}}$ mapping $\phi_e$ to $\phi_f$ with $\phi_v$ and $\phi_w$ fixed. Here $\mathbb{R}^2 \setminus \set{\phi(x) \colon x\in V(G)\setminus\set{v,w}}$ is the plane with $\abs{V(G)} - 2$ puncture points. Intuitively, parallel edges $e$ and $f$ are homotopic if $e$ can be continuously deformed into $f$ without passing through any vertices and while keeping the end-points fixed. As illustrated in \cref{NonHomotopicParallelEdges}, a drawing of a multigraph is \defn{non-homotopic} if no two parallel edges are homotopic. It is natural to consider non-homotopic drawings, since homotopic edges can be considered to be equivalent. The non-homotopic assumption immediately eliminates the example of a planar multigraph with exactly one vertex and more than one edge.

\begin{figure}[ht]
	\centering
	\includegraphics{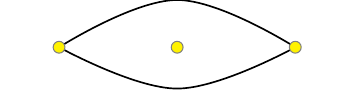}
	\caption{Non-homotopic parallel edges.}
	\label{NonHomotopicParallelEdges}
\end{figure}

A drawing $\phi$ of a multigraph $G$ is \defn{monotone} if $\phi_e$ is an x-monotone curve for each edge $e\in E(G)$; that is, $\phi_e$ intersects each vertical line in at most one point. Note that a monotone drawing has no self-loops and no edge self-intersects.
   
\section{Non-Homotopic Drawings}\label{sec:nonhomotop}

In this section we prove \cref{NonHomotopic}, which says that every non-homotopic $k$-crossing drawing on $n$ vertices has at most $6^{13n(k+1)}$ edges. This result follows quickly from the next lemma.

\begin{lemma}\label{NonHomotopicxy}
    Let $n, k$ be positive integers. Let $x, y$ be two \textup{(}not necessarily distinct\textup{)} vertices of an $n$-vertex multigraph $G$ such that every edge of $G$ is from $x$ to $y$ and $G$ has a non-homotopic $k$-crossing drawing. Then
    \begin{equation*}
       \abs{E(G)} \leq 6^{13 n (k + 1)}/n^2.
    \end{equation*}
\end{lemma}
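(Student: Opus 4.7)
The plan is to encode each edge, up to homotopy rel $V(G)$, by a bounded-length word over a constant-size alphabet, and then count the possible words to bound $\abs{E(G)}$. I first reduce to the case $x \ne y$; the loop case $x = y$ is analogous, with $n - 1$ rather than $n - 2$ non-endpoint punctures. Assuming $E(G) \ne \emptyset$, I fix a reference edge $\gamma_0 \in E(G)$ and, via an ambient homeomorphism of $\RR^2$ fixing $V(G)$ setwise, assume that $\gamma_0$ is a straight segment from $x$ to $y$. Cutting $\RR^2$ along $\gamma_0$ yields a topological disc $D$ whose boundary is the doubled segment and whose interior contains the other $n - 2$ vertices as punctures $p_1, \dotsc, p_{n-2}$. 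Each edge $\gamma \in E(G) \setminus \set{\gamma_0}$ meets $\gamma_0$ in at most $k$ points, so $\gamma$ becomes a collection of at most $k + 1$ sub-arcs in $D$ with endpoints on $\partial D$.

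Next, I construct a combinatorial signature for $\gamma$. I fix $n - 2$ pairwise disjoint simple auxiliary arcs in $D$, one from each puncture $p_i$ to $\partial D$, so that their complement, once the punctures are removed, is simply connected. The homotopy class rel endpoints of each sub-arc of $\gamma$ in $D$ is then determined by the reduced sequence of transverse crossings with the auxiliary arcs, together with orientation data at each crossing. Using a suitable cellular decomposition of $D$, each letter of this sequence can be recorded in an alphabet of constant size (for instance, ``cross the next auxiliary arc on the left/right''). The signature of $\gamma$ concatenates these sub-arc words with the cyclic positions of the $\leq k$ crossings along $\gamma_0$; since the edges of $E(G)$ are pairwise non-homotopic, distinct edges yield distinct signatures.

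The main technical step is to bound the signature length by $O(nk)$. The positions along $\gamma_0$ contribute only $O(k \log k)$ bits, so the difficulty is bounding the number of transverse crossings of $\gamma$ with each auxiliary arc, which the $k$-crossing hypothesis does not directly control. I plan to use a ``pull-tight''/bigon-removal argument: within the homotopy class of $\gamma$ rel $V(G)$, choose a representative minimising total crossings with the auxiliary system, and argue that any excess crossing with an auxiliary arc $\alpha_i$ creates an innermost bigon whose removal either reduces the self-intersection count of $\gamma$ or contradicts the $k$-crossing bound against $\gamma_0$. Combined with the self-intersection bound $k$, this should yield $O(k)$ crossings per auxiliary arc and thus signature length $O(nk)$. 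Counting words of length $O(nk)$ over a constant-size alphabet gives $2^{O(nk)}$ possible signatures, which is at most $6^{13 n (k + 1)}/n^2$ for appropriate absolute constants. The hardest part will be making the pull-tight step rigorous: it must be carried out without altering the other edges of $G$ (so that the non-homotopic and $k$-crossing properties of the full drawing are preserved), while still ensuring that the resulting signature faithfully records the homotopy class of $\gamma$, so that distinct non-homotopic edges yield distinct signatures.
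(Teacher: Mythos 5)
Your approach is genuinely different from the paper's. You cut along a single reference edge $\gamma_0$ and encode homotopy via crossings with a system of $n-2$ \emph{auxiliary arcs} that you fix arbitrarily; the paper instead builds a ``frame'' $F$ (a plane graph with $O(n)$ vertices) out of sub-curves of roughly $n$ actual edges of $G$, refined so that every edge of $F$ ``sees'' at most a constant number of other edges of $F$ across any face. That construction is the crux: since $F$ is built from pieces of edges of $G$, the $k$-crossing hypothesis \emph{directly} gives that every remaining edge crosses $\CC(E(F))$ at most $O(nk)$ times, with no pull-tight step at all. Your auxiliary arcs have no such control built in, which is exactly where your proof needs the hard work.

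The pull-tight step as you describe it does not close that gap, and I think it is wrong as stated. After minimising crossings in the homotopy class of $\gamma$, an innermost bigon between $\gamma$ and an auxiliary arc $\alpha_i$ may simply contain another puncture $p_j$ ($j\neq i$); such a bigon is essential and cannot be removed, and its existence neither produces a self-intersection of $\gamma$ nor a crossing with $\gamma_0$. So the number of (tight) crossings of $\gamma$ with $\alpha_i$ is \emph{not} bounded by a function of $k$ alone; a priori it is governed by how the other $n-3$ punctures are distributed, and the honest bound one can hope for is $O(n)$ per sub-arc of $\gamma$ (via an Euler-characteristic count: every complementary region must contain a puncture or touch $\partial D$), not the $O(k)$ per auxiliary arc you claim. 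With $k+1$ sub-arcs this could still give an $O(nk)$ signature length in total, but you have not argued that, and what you wrote argues something else. There are also two smaller issues to address: cutting the plane along $\gamma_0$ yields a disc with $n-1$ punctures (one at infinity), not $n-2$, so some winding data around the whole configuration must still be recorded (the paper handles the analogous issue by attaching winding numbers to crossings of its outer face); and ``pulling $\gamma$ tight'' must be done simultaneously with respect to $\gamma_0$, the auxiliary system, and self-crossings, so you need to justify that a single representative minimises all of these at once (for example by passing to geodesics in a hyperbolic metric), otherwise the $\leq k$ bounds against $\gamma_0$ and on self-intersections could be lost when you minimise crossings with the auxiliary arcs.
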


\begin{proof}[Proof of \cref{NonHomotopic}]
    Let $G$ be an $n$-vertex multigraph as in the statement of \cref{NonHomotopic}. For $x, y \in V(G)$, let $G_{x, y}$ be the subgraph of $G$ consisting of the edges from $x$ to $y$. Then $G_{x, y}$ satisfies the hypothesis of \cref{NonHomotopicxy}, so $\abs{E(G_{x, y})} \leq 6^{13 n (k + 1)}/n^2$. The result follows since $E(G) = \bigcup_{x, y} E(G_{x, y})$.
\end{proof}

In fact, \cref{NonHomotopicxy} implies that \cref{NonHomotopic} holds even with the $k$-crossing assumption weakened to every pair of parallel edges cross at most $k$ times, every pair of loops incident to the same vertex cross at most $k$ times, and every edge has at most $k$ self-intersections.

Before proving \cref{NonHomotopicxy} we need a simple lemma about plane graphs as well as some basic facts about the winding number. We will think of each edge $e$ of a plane graph as having two \defn{sides}, labelled $e^+$ and $e^-$ arbitrarily. If $e$ is incident to two faces, then $e^+$ is incident to one of these faces and $e^-$ to the other. If $e$ is incident to only one face, then both $e^+$ and $e^-$ are both incident to this face. See \cref{fig:sides-of-edges}.

\begin{figure}[!h]
    \centering
    (a)
    \includegraphics[width=0.4\linewidth]{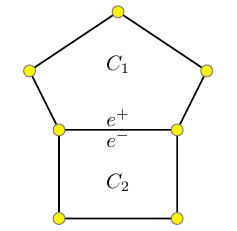}
    (b)     
    \includegraphics[width=0.4\linewidth]{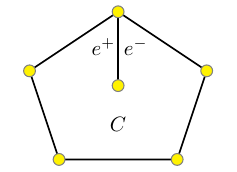}
    \caption{(a) Side $e^+$ is incident to face $C_1$, and $e^-$ is incident to $C_2$. (b) Sides $e^+$ and $e^-$ are incident to face $C$.        }
    \label{fig:sides-of-edges}
\end{figure}

\begin{lemma}\label{PlanarLongFaces}
    Let $G$ be a plane graph in which no vertices are isolated, all but at most $r$ vertices have degree at least three, and every face is incident to at least seven sides. Then $\abs{V(G)} \leq 10r - 28$.
\end{lemma}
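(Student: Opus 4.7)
The plan is to combine Euler's formula with two edge counts (one via face-degrees, one via vertex-degrees). Let $n = \abs{V(G)}$, $m = \abs{E(G)}$, and let $f$ be the number of faces. Note that the face-degree hypothesis forces $n \geq 2$ (a single vertex with loops or with no edges cannot have all face boundaries of length $\geq 7$), so connectivity of $G$ ensures every vertex has degree at least $1$.

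The first step uses the face condition. Each face has boundary walk of length at least $7$, and $\sum_F \deg(F) = 2m$, so $7f \leq 2m$, i.e.\ $f \leq 2m/7$. Plugging into Euler's formula $n - m + f = 2$ gives
\begin{equation*}
    2 \leq n - m + \tfrac{2m}{7} = n - \tfrac{5m}{7}, \quad \text{so} \quad 5m \leq 7n - 14.
\end{equation*}

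The second step uses the vertex condition. At most $r$ vertices have degree less than $3$, and each such vertex still has degree at least $1$ by connectivity, while the remaining $n - r$ vertices have degree at least $3$. Hence
\begin{equation*}
    2m = \sum_{v \in V(G)} \deg(v) \geq 3(n - r) + r = 3n - 2r.
\end{equation*}

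Combining the two bounds by scaling to $10m$ yields $15n - 10r \leq 10m \leq 14n - 28$, which rearranges to $n \leq 10r - 28$. There is no real obstacle here: the one thing to be careful about is the lower bound $1$ (rather than $0$) on the degrees of the $r$ exceptional vertices, as this is exactly what sharpens the naive estimate $n \leq 15r - 28$ (obtained by ignoring those vertices) to the claimed $n \leq 10r - 28$.
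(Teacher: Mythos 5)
Your proof is correct and follows essentially the same route as the paper: the face-incidence count $7f \leq 2m$ combined with Euler's formula gives $5m \leq 7(n-2)$, and the degree sum (using connectivity to get degree $\geq 1$ on the exceptional vertices) gives $2m \geq 3n - 2r$; eliminating $m$ yields the bound. The only cosmetic difference is that the paper counts edge–face incidence pairs directly rather than summing boundary-walk lengths, but both produce the same inequality.
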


\begin{proof}
    Let $n, m, F$ denote the number of vertices, edges and faces of $G$, respectively. Consider all pairs $(e^{\bullet}, f)$ where $f$ is a face of $G$ and $e^\bullet$ is a side of an edge of $G$ that is incident to $f$. Every edge $e$ corresponds to two such pairs (one for $e^+$ and one for $e^-$) and every face is in at least seven, so $7F \leq 2m$. Hence, by Euler's formula,
    \begin{equation*}
        n + \tfrac{2}{7} m \geq n + F \geq m + 2. 
    \end{equation*}
    Thus $m \leq \frac{7}{5} (n - 2)$. Finally,
    since $G$ has minimum degree at least one,
    \begin{equation*}
        \tfrac{14}{5} (n - 2) \geq 2m 
        = \sum_{v \in V(G)} \deg(v) 
        \geq 3(n - r) + r = 3n - 2r.
    \end{equation*}
    Rearranging gives the required result.
\end{proof}

The following facts about the winding number can be found in most texts on complex analysis or algebraic topology (such as~\citep{Fulton95}). Let $\gamma \colon [0, 1] \to \mathbb{C}$ be a continuous closed curve (so $\gamma(0) = \gamma(1)$). For any $a \notin \im \gamma$ there are continuous functions $r \colon [0, 1] \to \mathbb{R}_{>0}$ and $\theta \colon [0, 1] \to \mathbb{R}$ such that $\gamma = a + r e^{2 \pi i \theta}$. The \defn{winding number of $\gamma$ about $a$}, denoted by \defn{$I(\gamma, a)$}, is the integer $\theta(1) - \theta(0)$. This integer does not depend on the parameterisation $(r, \theta)$. It satisfies the following properties.
\begin{itemize}
    \item The winding number $I(\gamma, \cdot)$ is constant on each component of $\mathbb{C} \setminus \im \gamma$, is zero on the unbounded component and differs by one between adjacent components.
    \item If $\gamma$ is the union of two continuous closed curves $\gamma_1$ and $\gamma_2$, then $I(\gamma, a) = I(\gamma_1, a) + I(\gamma_2, a)$ for any $a \notin \im \gamma$.
\end{itemize}

The following two facts are also standard but do not always appear in textbooks. We include their proof for completeness.

\begin{lemma}\label{lem:winding}
    Let $a$ be a point in the plane and let $\gamma$, $\gamma_0$, and $\gamma_1$ be closed curves in $\RR^2 - a$.
    \begin{enumerate}
        \item $\gamma_0$ and $\gamma_1$ are homotopic in $\RR^2 - a$ if and only if they have the same winding number about $a$.
        \item $\gamma$ must self-intersect at least $\abs{I(\gamma, a)} - 1$ times \textup{(}counted with multiplicity\footnote{If $\gamma$ passes through the same point $k$ times, then this contributes $\binom{k}{2}$ self-intersection which is consistent with the definition of crossing in \cref{sec:notation}}\textup{)}.
    \end{enumerate}
\end{lemma}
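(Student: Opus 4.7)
The plan is to handle the two parts separately, each combining a standard topological-invariance argument with an explicit combinatorial or analytic model. For part~1, the forward direction is immediate from the homotopy-invariance of the winding number: if $H \colon [0, 1]^2 \to \RR^2 \setminus \set{a}$ is a homotopy from $\gamma_0$ to $\gamma_1$, then $s \mapsto I(H(s, \cdot), a)$ is continuous (e.g.\ by lifting $H$ to the polar universal cover $\RR_{>0} \times \RR$ and reading off the change in the angular coordinate over $t \in [0, 1]$) and takes integer values, so it must be constant.

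For the backward direction of part~1, I would exhibit an explicit homotopy via polar coordinates. Write $\gamma_i(t) = a + r_i(t) \exp(2 \pi i \theta_i(t))$ with $r_i$ continuous positive and $\theta_i$ continuous real, so that $I(\gamma_i, a) = \theta_i(1) - \theta_i(0) =: w$. The linear interpolation
\begin{equation*}
    H(s, t) = a + \bigl((1 - s) r_0(t) + s r_1(t)\bigr) \exp\bigl(2 \pi i [(1 - s) \theta_0(t) + s \theta_1(t)]\bigr)
\end{equation*}
has strictly positive modulus and so avoids $a$, and the requirement that $H(s, \cdot)$ is a closed loop for each $s$ reduces to the radial endpoints agreeing (automatic, since $r_i(0) = r_i(1)$) and the angular endpoints agreeing modulo~$1$. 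Since $w$ is an integer, the latter is exactly the hypothesis $\theta_0(1) - \theta_0(0) = \theta_1(1) - \theta_1(0)$.

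For part~2, set $w = I(\gamma, a)$ and, after disposing of the case that $\gamma$ is simple by the Jordan curve theorem, regard the image of $\gamma$ as a connected plane graph $H$ whose vertices are the finitely many self-intersection points of $\gamma$ and whose edges are the arcs of $\gamma$ between consecutive self-intersections. If the vertex $v$ is visited by $\gamma$ exactly $k_v \geq 2$ times, then $v$ has $2 k_v$ incident half-edges and contributes $\binom{k_v}{2}$ to the total self-intersection count. Summing over $v$ gives $\abs{E(H)} = \sum_v k_v$, so Euler's formula yields $\abs{F(H)} = 2 + \sum_v (k_v - 1)$. On the other hand, the recalled properties of the winding number imply that $I(\gamma, \cdot)$ is constant on each face, vanishes on the unbounded face, and changes by exactly $\pm 1$ across each edge, so there must be at least $\abs{w} + 1$ faces. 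Combining these bounds and using $\binom{k}{2} \geq k - 1$ for every $k \geq 2$ gives $\sum_v \binom{k_v}{2} \geq \sum_v (k_v - 1) \geq \abs{w} - 1$, as required.

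The main technical care is in part~2, where one must treat self-intersection points of multiplicity greater than two correctly and confirm that the graph extracted from $\gamma$ is genuinely planar, connected, and finite; both issues are handled by the general-position perspective on the finitely many self-crossings together with the inequality $\binom{k}{2} \geq k - 1$. The analytic content of part~1 is routine once the polar representation is available.
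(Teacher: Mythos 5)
Your part~1 follows the paper's proof almost verbatim: the forward direction via homotopy-lifting of the angular coordinate to $\RR$ and the observation that the integer-valued function $t \mapsto \theta(1, t) - \theta(0, t)$ is continuous, and the backward direction by linear interpolation in polar coordinates. Nothing to compare there.

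Your part~2 takes a genuinely different route and it works. The paper argues by induction on the number $\ell$ of self-intersections: if $\ell = 0$ the Jordan Curve Theorem gives $\abs{I(\gamma, a)} \leq 1$; if $\ell \geq 1$, split $\gamma$ at a self-intersection point into two closed sub-curves $\gamma^{(0)}, \gamma^{(1)}$, apply the induction hypothesis to each, and use additivity $I(\gamma, a) = I(\gamma^{(0)}, a) + I(\gamma^{(1)}, a)$ together with the fact that the split consumes at least one crossing, so $\ell \geq \ell_0 + \ell_1 + 1$. Your argument instead treats the image of $\gamma$ as a connected plane multigraph $H$ and uses Euler's formula: with $\abs{E(H)} = \sum_v k_v$ and $\abs{V(H)} = \sum_v 1$ you get $\abs{F(H)} = 2 + \sum_v (k_v - 1)$, and the gradient condition on winding numbers forces $\abs{F(H)} \geq \abs{w} + 1$, which combined with $\binom{k}{2} \geq k - 1$ gives the claim. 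The paper's induction is shorter and uses only additivity of winding numbers plus the Jordan Curve Theorem; your route trades that for Euler's formula and a little extra bookkeeping, but in exchange gives a more explicit structural picture (a lower bound on the number of faces of the curve's image). One point you should make explicit: for the ``changes by $\pm 1$ across each edge'' step to produce $\abs{F(H)} \geq \abs{w} + 1$, you need every edge of $H$ to bound two \emph{distinct} faces and the dual of $H$ to be connected. The former holds because $H$ is connected with all degrees even (degree $2k_v$), hence bridgeless; the latter is standard for connected plane graphs. As written, your proof silently assumes both, so add a sentence to that effect. You also correctly dispose of the simple case separately, which both proofs need since $H$ would then be vertex-free.
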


\begin{proof}
    We first address part 1. Viewing $\gamma_0$ and $\gamma_1$ as continuous functions from $[0, 1]$ to $\mathbb{C}$, there are continuous functions $r_0, r_1 \colon [0, 1] \to \RR_{>0}$ and $\theta_0, \theta_1 \colon [0, 1] \to \RR$ such that $\gamma_j = a + r_j e^{2 \pi i \theta_j}$. First suppose that $\gamma_0$ and $\gamma_1$ have the same winding number $I$, so $I = \theta_1(1) - \theta_1(0) = \theta_0(1) - \theta_0(0)$. Let $H \colon [0, 1]^2 \to \mathbb{C} \setminus \set{a}$ be given by $a + r e^{2 \pi i \theta}$ where $r(\cdot, t) = t r_1 + (1 - t) r_0$ and $\theta(\cdot , t) = t \theta_1 + (1 - t) \theta_0$. Then $H$ is continuous, $H(\cdot, 0) = \gamma_0$, and $H(\cdot , 1) = \gamma_1$. Also, $\theta(1, t) = \theta(0, t) + I$ and so $H(\cdot, t)$ is a closed curve for each $t$. Thus $\gamma_0$ and $\gamma_1$ are homotopic.
    
    Next suppose that $\gamma_0$ and $\gamma_1$ are homotopic. Then there is a continuous function $H \colon [0, 1]^2 \to \mathbb{C} \setminus \set{a}$ where $H(\cdot, t)$ is a closed curve for each $t$, $H(\cdot, 0) = \gamma_0$, and $H(\cdot, 1) = \gamma_1$. Now $\tilde{\theta} \coloneqq \frac{1}{2 \pi }\arg(H - a) \colon [0, 1]^2 \to \RR/\mathbb{Z}$ is continuous and $\RR$ is a covering space for $\RR/\mathbb{Z}$ so, by the homotopy lifting property (see, for example, \citep[Prop~1.30]{Hatcher02}), there is a continuous $\theta \colon [0, 1]^2 \to \RR$ such that $\tilde{\theta}(x, t) \equiv \theta(x, t) \pmod{1}$ for all $x$ and $t$. The function $r \coloneqq \abs{H - a} \colon [0, 1]^2 \to \RR_{>0}$ is continuous. Thus we may write $H = a + r e^{2 \pi i \theta}$ for continuous $r \colon [0, 1]^2 \to \RR_{> 0}$ and $\theta \colon [0, 1]^2 \to \RR$. Since $H(\cdot, t)$ is a closed curve, $f(t) \coloneqq \theta(1, t) - \theta(0, t) \in \mathbb{Z}$ for all $t$. Note that $f$ is continuous and so constant. Thus $I(\gamma_1, a) = \theta(1, 1) - \theta(0, 1) = f(1) = f(0) = \theta(0, 1) - \theta(0, 0) = I(\gamma_0, a)$ and so $\gamma_0$ and $\gamma_1$ have the same winding number about $a$.
    
    We now address part 2. We will prove the following equivalent statement by induction on $\ell$: if $\gamma$ self-intersects $\ell$ times, then $\abs{I(\gamma, a)} \leq \ell + 1$. First suppose that $\ell = 0$. Then $\gamma$ is a simple closed curve. By the Jordan Curve Theorem, $\gamma$ splits the plane into two regions: an interior region bounded by $\gamma$ and an exterior unbounded region. $I(\gamma, \cdot)$ is zero on the unbounded region and is $\pm 1$ on the interior region (since the interior and exterior regions are adjacent). Now suppose that $\ell \geq 1$. Let $b$ be a point of self-intersection of $\gamma$. We may partition $\gamma$ into two closed curves $\gamma^{(0)}$ and $\gamma^{(1)}$ that meet at $b$. Let $\gamma^{(j)}$ have $\ell_j$ self-intersection. Then $\ell \geq \ell_0 + \ell_1 + 1$ (the $+1$ is for the self-intersection at $b$). Finally, by induction on $\ell$,
    \begin{equation*}
        \abs{I(\gamma, a)} = \abs{I(\gamma^{(0)}, a) + I(\gamma^{(1)}, a)} \leq \abs{I(\gamma^{(0)}, a)} + \abs{I(\gamma^{(1)}, a)} \leq (\ell_0 + 1) + (\ell_1 + 1) \leq \ell + 1. \qedhere
    \end{equation*}
\end{proof}

We are finally ready to prove \cref{NonHomotopicxy}.

\begin{proof}[Proof of \cref{NonHomotopicxy}]
We introduce some notation. Suppose that $H$ is a graph drawn in the plane and $E' \subseteq E(H)$ is a set of edges. We write $\CC(E')$ for the union of the arcs given by the edges of $E'$. This includes all end-points of edges of $E'$. For example, if $H$ is a $K_3$ with vertices $\set{(0,0), (1,0), (0,1)}$ and straight edges, then $\CC(E(H))$ is the perimeter of the right-angled triangle with vertices $\set{(0,0), (1,0), (0,1)}$.

We will first prove the result under the assumption that $x \neq y$ and at the end of the proof we will explain how the strategy is adapted to handle the $x = y$ case. Let the vertices of $G$ be $x, y, v_1, \dotsc, v_{n - 2}$.

Fix a non-homotopic drawing of $G$ in which every pair of edges cross at most $k$ times. Recall that all edges of $G$ are from $x$ to $y$. By applying a small perturbation we may and will assume that no three edges meet at a point. For $i = 1, \dotsc, n - 2$, let $e_i$ be an edge of $G$ passing closest to $v_i$ (the $e_i$ may not be distinct). Let $p_i$ be a point on $e_i$ closest to $v_i$ (the $p_i$ may not all be distinct, some of them may be crossings and some of them may be $x$ or $y$). By the minimality of the length of line-segment $p_i v_i$, no edge of $G$ crosses the interior of $p_i v_i$.

We are going to build a `frame' $F$ which we will use to encode the homotopy of edges of $G$: edges of $G$ that intersect $F$ `in the same way' will be homotopic. The frame $F$ will be a connected plane graph with $V(G) \subseteq V(F)$. We first build a frame $F_n$ that will be a plane tree containing all vertices of $G$. We use $f$ to refer to edges of a frame and $e$ for edges of $G$. 

As illustrated in \cref{BuildingFn}, start with $F_1$ which is the empty graph on vertex set $\set{x}$.  In step 1, let $f_1$ be a shortest sub-curve of $e_1 \cup p_1 v_1$ from $x$ to $v_1$. Note that $f_1$ is a non-self-intersecting curve from $x$ to $v_1$. Furthermore, all edges of $G - e_1$ intersect $f_1$ at most $k$ times. Let $F_2$ be the plane graph obtained from $F_1$ by adding the edge $f_1$ and the vertex $v_1$. In step $i$, for $i = 2, \dotsc, n - 2$, do the following:
\begin{enumerate}
    \item If $v_i \in V(F_i)$, then move straight onto step $i + 1$.
    \item Else if $v_i \in \CC(E(F_i))$, then let $F_{i + 1}$ be the plane tree obtained from $F_i$ by adding vertex $v_i$ (this subdivides an edge of $F_i$ into two). Move straight onto step $i + 1$.
    \item Else let $f_i$ be the shortest sub-curve of $e_i \cup p_i v_i$ that goes from $\CC(E(F_i))$ to $v_i$. Let $v'_i$ be the point where $f_i$ meet $\CC(E(F_i))$.
    \item If $v'_i \in V(F_i)$, then let $F_{i + 1}$ be the plane tree obtained from $F_i$ by adding edge $f_i$ and vertex $v_i$.
    \item If $v'_i \notin V(F_i)$, then let $F_{i + 1}$ be the plane tree obtained from $F_i$ by adding vertex $v'_i$ (this subdivides an edge of $F_i$ into two), adding edge $f_i$, and adding vertex $v_i$.
\end{enumerate}
This defines $F_1, \dotsc, F_{n - 1}$. If $y \in V(F_{n - 1})$, then take $F_n = F_{n - 1}$. Else if $y \in \CC(E(F_{n - 1}))$, then let $F_n$ be the plane graph obtained from $F_{n - 1}$ by adding vertex $y$ (this subdivides an edge of $F_{n - 1}$ into two). Else let $e_{n - 1}$ by any edge of $G$, and let $f_{n - 1}$ be the shortest sub-curve of $e_{n - 1}$ that goes from $\CC(E(F_{n - 1}))$ to $y$ (which is an end-point of $e_{n - 1}$). 
Let $v'_{n - 1}$ be the point where $f_{n - 1}$ meets $\CC(E(F_{n - 1}))$. If $v'_{n - 1} \in V(F_{n - 1})$, then let $F_n$ be the plane graph obtained from $F_{n - 1}$ by adding edge $f_{n - 1}$ and vertex $y$. If $v'_{n - 1} \notin V(F_{n - 1})$, then let $F_n$ be the plane graph obtained from $F_{n - 1}$ by adding vertex $v'_{n - 1}$ (this subdivides an edge of $F_{n - 1}$ into two), adding edge $f_{n - 1}$, and adding vertex $y$. 

\begin{figure}[!ht]
    \centering
    \includegraphics[width=\textwidth]{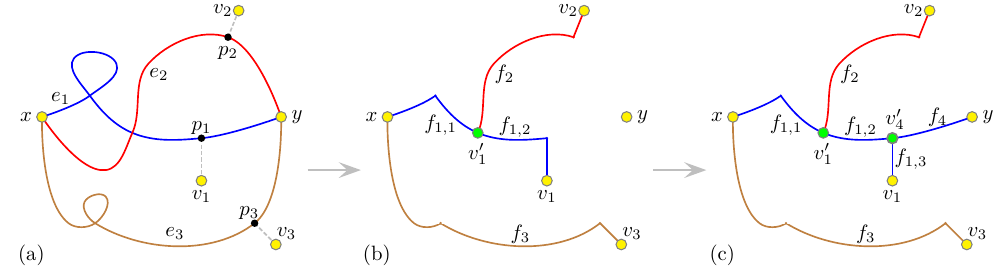}
    \caption{Example of the $n=5$ case: (a) drawing of $G$, (b) $F_{n-1}$ and (c) $F_n$, where $f_{1,1}$ $f_{1,2}$ and $f_{1,3}$ are obtained by subdividing $f_1$.}
    \label{BuildingFn}
\end{figure}

Note the following properties of $F_n$:
\begin{itemize}
    \item Since each step introduces at most two vertices, $F_n$ is a plane tree with at most $2n$ vertices.
    \item Since step $i$ introduces $v_i$ (if it is not already present), $V(G) \subseteq V(F)$.
    \item Since $v'_i$ has degree at least three when introduced, all vertices in $V(F) \setminus V(G)$ have degree at least three.
    \item $\CC(E(F_n))$ is a subset of the union of $\CC(\set{e_1, \dotsc, e_{n - 1}})$ and the line-segments $p_i v_i$ ($i = 1, \dotsc, n - 2$). In particular, each edge of $G - \set{e_1, \dotsc, e_{n - 1}}$ intersects $\CC(E(F_n))$ at most $k(n - 1)$ times.
\end{itemize}

We now construct a new frame $F$ (still a plane graph) building on $F_n$. Suppose, in the current frame, that sides $f^\circ$ ($\circ \in \set{+, -}$) and $f'^\bullet$ ($\bullet \in \set{+, -}$) are both incident to a face $C$. We say \defn{$f^\circ$ sees $f'^\bullet$ across $C$} if there is an edge $e$ of $G$ such that, at some point on $e$'s journey from $x$ to $y$, $e$ crosses edge $f$ leaving on side $f^\circ$ and then crosses edge $f'$ arriving on side $f'^\bullet$ and, in between, $e$ is in the interior of $C$ and crosses no other edge of $F$.\footnote{To be precise: using the $\phi$ notation defined in \cref{sec:notation}, there are $s < t$ in $[0, 1]$ such that $\phi_e(s)$ is in $f$, $\phi_e(t)$ is in $f'$, the curve $(\phi_e(s + \eps) \colon \eps \to 0^+)$ approaches $f$ on the side of $f^\circ$, the curve $(\phi_e(t - \eps) \colon \eps \to 0^+)$ approaches $f'$ on the side of $f'^\bullet$, and $\phi_e((s, t))$ is inside $C$ and disjoint from $\CC(E(F))$.}

We carry out the following process. Let $F$ be the current frame which is initially $F_n$. Let $E'$ be a subset of $E(G)$ initialised as $\set{e_1, \dotsc, e_{n - 1}}$. Iteratively do the following.
\begin{itemize}
    \item As illustrated in \cref{BuildingF}, suppose there is a side, $f^\circ$, of an edge and a face $C$ of $F$ such that $f^\circ$ sees at least nine other sides of edges of $F$ across $C$. Label nine of these sides $f_1^{\bullet_1}, f_2^{\bullet_2}, \dotsc, f_9^{\bullet_9}$ ($\bullet_1, \dots, \bullet_9 \in \set{+, -}$) in any order.
    \item Since $f^\circ$ sees $f_j^{\bullet_j}$ across $C$, there is an edge $e'_j$ of $G$ that crosses $f$ leaving on side $f^\circ$ and then crosses $f_j$ arriving on side $f_j^{\bullet_j}$ and, in between, is inside $C$ and crosses no other edges of $F$. Let $c'_j$ be the shortest sub-curve of $e'_j$ from $f$ to $f_j$ whose interior is inside $C$ and crosses no other edges of $F$.
    \item Since $f^\circ$ sees nine distinct sides, there is some choice of $j$ such that $c'_j$ splits $C$ into two faces both of which are incident to at least seven sides: a part of $f^\circ$, a part of $f_j^{\bullet_j}$, four of $\set{f_1^{\bullet_1}, \dotsc, f_9^{\bullet_9}} \setminus \set{f_j^{\bullet_j}}$, and one side of $c'_j$. Let $e$ be this $c'_j$ and $e'$ be $e'_j$.
    \item Add the edge $e$ to $F$ together with a vertex at the point where $e$ meets $f$ and a vertex at the point where $e$ meets $f_j$ (this subdivides each of $f$ and $f_j$ exactly once).
    \item Add $e'$ to $E'$.
\end{itemize}

\begin{figure}[!ht]
    \centering
    \includegraphics{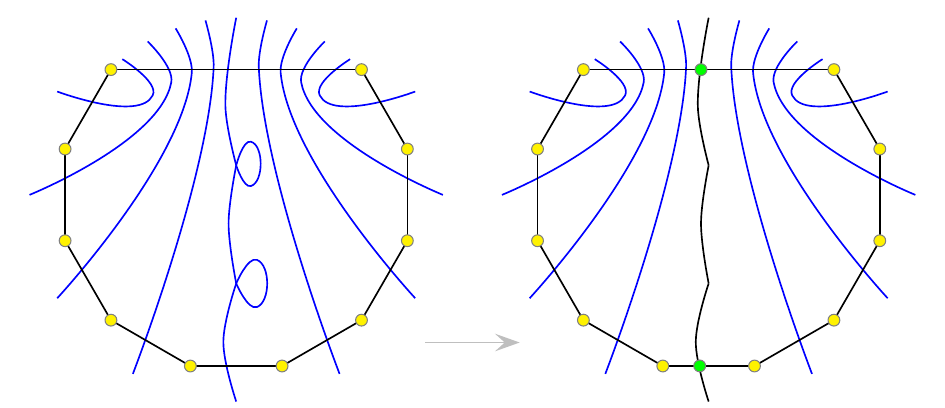}
    \caption{Building $F$. The edges of $F$ are black.}
    \label{BuildingF}
\end{figure}

The process stops when, for every side $f^\circ$ ($f \in E(F)$, $\circ \in \set{+, -}$) and every face $C$ of $F$ to which $f^\circ$ is incident, $f^\circ$ sees at most eight other sides across $C$. We now show the number of steps in this process is at most $5(n - 1)$ and the final frame has at most $10 n$ edges. First note that if $F_n$ has at most four edges, then no steps will be carried out and we have the required bounds. Otherwise, $F_n$ is a plane tree with at least five edges in which at most $n$ vertices have degree less than three. Every step introduces two vertices of degree three and splits a face into two faces both of which are incident to at least seven sides. Hence, every frame in the process is a connected plane graph, has at most $n$ vertices of degree less than 3 and every face of $F$ is incident to at least seven sides. Thus, by \cref{PlanarLongFaces}, every frame within the process has at most $10n - 28 \leq 10n$ vertices. Since each step adds two vertices, the process must stop within $5n - 14 \leq 5(n - 1)$ steps.

Note the following properties of the final frame $F$ and final set $E'$.
\begin{itemize}
    \item $F$ is a simple plane graph on at most $10n$ vertices. Each step introduces at most three new edges (the edge across the face plus one for each of the two subdivisions) and $F_n$ had at most $2n$ edges, so $F$ has at most $2n + 3 \cdot 5(n - 1) \leqslant 17 n$ edges.
    \item In $F$, every side of an edge sees at most nine sides across the face to which it is incident.
    \item Since the process stops within $5(n - 1)$ steps, $\abs{E'} \leq (n - 1) + 5(n - 1) = 6(n - 1)$.
    \item $\CC(E(F))$ is a subset of the union of $\CC(E')$ and the line-segments $p_i v_i$. In particular, each edge in $E(G) - E'$ intersects $\CC(E(F))$ at most $6(n - 1)k$ times.
\end{itemize}
We are now going to encode each edge of $G - E'$ using the sides of edges of $F$ so that if two edges of $G - E'$ had the same encoding, then they would be homotopic. Now, $F$ is a simple connected plane graph with one \defn{outer face} $C_{\infty}$. The rest of its faces are \defn{inner faces}. An edge $f$ of $F$ is an \defn{outer edge} if it is incident to the outer face of $F$. Otherwise $f$ is an \defn{inner edge}.

We will encode each edge $e \in E(G) - E'$ by writing down the sequence of sides of edges of $F$ that $e$ crosses (as it is traversed from $x$ to $y$) together with some further information. Suppose that $f_1^{\bullet_1}$ and $f_2^{\bullet_2}$ ($f_1, f_2 \in E(F)$, $\bullet_1, \bullet_2 \in \set{+, -}$) are two consecutive sides in the sequence for $e$: that is, $e$ arrives at $f_1$ on side $f_1^{\bullet_1}$, crosses $f_1$, arrives at $f_2$ on side $f_2^{\bullet_2}$, crosses $f_2$, and between $f_1$ and $f_2$ no other edges were crossed. If $e$ goes from $f_1$ to $f_2$ across an inner face $C$, then $f_1^{\bullet_1}, f_2^{\bullet_2}$ determine this portion of $e$ up to homotopy. However, this is not the case if $e$ goes from $f_1$ to $f_2$ across the outer face $C_{\infty}$; here we need to further specify how many times this portion of $e$ loops around $F$ and whether this looping is anticlockwise or clockwise.

To this end, for each pair of outer edges $f, f'$ of $F$, fix:
\begin{itemize}
    \item a simple curve $P_{f, f'} \subseteq \CC(E(F))$ that joins $f$ to $f'$ along the outer face $C_{\infty}$,
    \item a point $p_{f, f'}$ in the plane which is sufficiently close to (but not on) the curve $P_{f, f'}$ so that the perpendicular segment from $p_{f, f'}$ to $P_{f, f'}$ does not intersect any edge of $G$ or $F$.
\end{itemize}
If $x$ is incident to the outer face $C_{\infty}$, then, for each outer edge $f$ of $F$, fix a curve $P_{x, f} \subseteq \CC(E(F))$ that joins $x$ to $f$ along $C_{\infty}$ and fix a point $p_{x, f}$ in the plane that is sufficiently close to but not on $P_{x, f}$. 
If $y$ is incident to $C_{\infty}$, then  similarly define $P_{y, f}$ and $p_{y, f}$ for each outer edge $f$.

Now if an edge $e$ goes from $f_1$ to $f_2$ across $C_{\infty}$, then we encode this portion of the journey (call it $e'$) as follows. Form a closed curve consisting of $e'$, $P_{f_1, f_2}$ as well as parts of $f_1, f_2$ that join these up, orient this curve so that $e$ goes from $f_1$ to $f_2$, and define \defn{$I(e')$} to be the resulting winding number about $p_{f_1, f_2}$ (see \cref{fig:signature}). 

\begin{figure}[!h]
    \centering
    \includegraphics[width=0.6\linewidth]{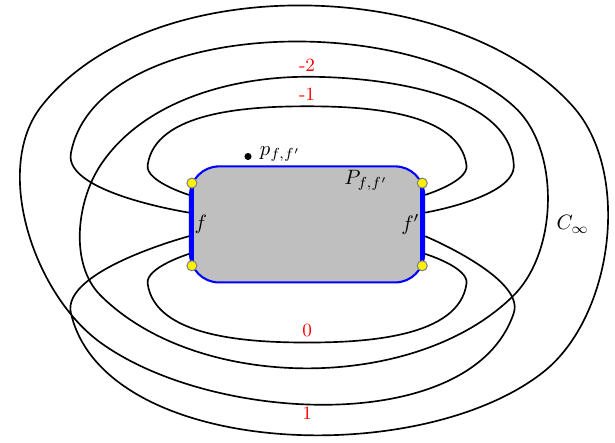}
    \caption{Signature}
    \label{fig:signature}
\end{figure}

By \cref{lem:winding}, for a point $p$ in the plane, two closed curves in $\RR^2 - p$ are homotopic if and only if they have the same winding number around $p$. Since the closed curve created here never enters an inner face of $F$, the winding number $I(e')$ determines $e'$ up to homotopy. Similarly, if an edge $e$ goes from $x$ to an outer edge $f$ across $C_{\infty}$ (call this portion $e'$) we form a closed curve consisting of $e'$, $P_{x, f}$ as well as the part of $f$ that joins these up and define \defn{$I(e')$} to be the winding number of this closed curve about $p_{x, f}$. Define $I(e')$ similarly if an edge $e$ goes from an outer edge $f$ to $y$.

We are now ready to define, for an edge $e \in E(G) - E'$, the \defn{signature of $e$}, denoted by \defn{$\sigma(e)$}. It is a sequence $a_1, f_1^{\bullet_1}, a_2, f_2^{\bullet_2}, \dotsc, a_{\ell}, f_{\ell}^{\bullet_{\ell}}, a_{\ell + 1}$ where:
\begin{itemize}
    \item The edge $e$ starts at $x$ and crosses edges $f_1, \dotsc, f_{\ell} \in E(F)$ in that order (each time arriving on the side $f_i^{\bullet_i}$) before arriving at $y$. The $f_i^{\bullet_i}$ need not be distinct.
    \item If $e$ goes from $f_{i - 1}$ to $f_{i}$ by crossing an internal face, then $a_i$ is defined to be $\ast$.
    \item If $e$ goes from $f_{i - 1}$ to $f_{i}$ by crossing $C_{\infty}$, then $a_i$ is the winding number $I(e')$ where $e'$ is this portion of $e$.
    \item If $e$ goes from $x$ to $f_1$ by crossing an internal face, then $a_1$ is defined to be $\ast$.
    \item If $e$ goes from $x$ to $f_1$ by crossing $C_{\infty}$, then $a_1$ is the winding number $I(e')$ where $e'$ is this portion of $e$.
    \item If $e$ goes from $f_\ell$ to $y$ by crossing an internal face, then $a_{\ell + 1}$ is defined to be $\ast$.
    \item If $e$ goes from $f_\ell$ to $y$ by crossing $C_{\infty}$, then $a_{\ell + 1}$ is the winding number $I(e')$ where $e'$ is this portion of $e$.
\end{itemize}
As discussed above, the signature of $e$ determines $e$ up to homotopy, so the number of edges in $E(G) - E'$ is at most the number of possible signatures. We look in some more detail at the value of $a_i$ when $e$ goes from $f_{i - 1}$ to $f_i$ by crossing $C_{\infty}$; denote this portion of $e$ by $e'$ so $a_i$ is the winding number $I(e')$. By \cref{lem:winding}, a closed curve with winding number $r$ about a point must self-intersect at least $\abs{r} - 1$ times. Hence, the closed curve consisting of $e', P_{f_{i - 1}, f_i}$ as well as the parts of $f_{i - 1}, f_i$ joining these up must self-intersect at least $\abs{a_i} - 1$ times. But none of these self-intersections can occur on $\CC(E(F))$ as the interior of $e'$ is within $C_{\infty}$. Thus $e'$ must self-intersect at least $\abs{a_i} - 1$ times. Similar reasoning applies for $a_1$ and $a_\ell$. Note the following properties of $\sigma(e)$:
\begin{itemize}
    \item Since each edge in $E(G) - E'$ intersects $\CC(E(F))$ at most $6(n - 1)k$ times, $\ell \leq 6(n - 1)k$.
    \item Since each edge self-intersects at most $k$ times, $\sum_i \max\set{\abs{a_i} - 1, 0} \leq k$ where we ignore summands corresponding to $\ast$.
    \item There are $2\abs{E(F)} \leq 34n$ choices for $f_1^{\bullet_1}$.
    \item Since every side sees at most nine sides across the face to which it is incident, there are, given $f_i^{\bullet_i}$, at most nine choices for $f_{i + 1}^{\bullet_{i + 1}}$.
\end{itemize}
We now bound from above the number of possible sequences satisfying these properties. Fix $\ell$. The number of possible sequences $f_1^{\bullet_1}, f_2^{\bullet_2}, \dotsc, f_{\ell}^{\bullet_\ell}$ in a signature is at most $34n \cdot 9^{\ell - 1}$. For each $i$, let $b_i = \max\set{\abs{a_i} - 1, 0}$ if $a_i \neq \ast$ and let $b_i = 0$ if $a_i = \ast$. The $b_i$ are $\ell + 1$ non-negative integers with sum at most $k$ and so there are at most $\binom{k + \ell + 1}{k}$ choices for $b_1, \dotsc, b_{\ell + 1}$. Given $b_i$ there are at most four options for $a_i$ (if $b_i = 0$, then $a_i$ could be $-1$, $0$, $1$, or $\ast$). Hence, the number of options for $a_1, \dotsc, a_{\ell + 1}$ is at most $4^{\ell} \binom{k + \ell + 1}{k}$. Thus, for fixed $\ell$, the number of signatures is at most
\begin{equation*}
    34n \cdot 9^{\ell - 1} \cdot 4^{\ell} \binom{k + \ell + 1}{k} \leq 4n \cdot 6^{2\ell} \binom{6nk}{k} \leq 4n \cdot 6^{12(n - 1)k} \cdot (6en)^{k}.
\end{equation*}
There are fewer than $6nk$ choices for $\ell$, and $\abs{E(G)}$ is at most the number of signatures plus $\abs{E'}$. So 
\begin{align*}
    \abs{E(G)} \leq 6nk \cdot 4n \cdot 6^{12(n - 1)k} \cdot (6en)^{k} + \abs{E'} & \leq 24 n^2 k \cdot (6en)^k \cdot 6^{12(n - 1)k} + 6(n - 1) \\
    & \leq 24 n^2 (k + 1) \cdot (6en)^k \cdot 6^{12(n - 1)k}.
\end{align*}
Hence, it suffices to prove that
\begin{equation*}
    24 n^4 (k + 1) \cdot (6en)^k \leq 6^{nk + 13n + 12k}.
\end{equation*}
But $n^4 \leq 6^n$, $24(k + 1) \leq 6^{3k}$, and $(6en)^k \leq (6^{n + 2})^k = 6^{nk + 2k}$, as required.

Now we outline how the above argument is altered to prove the result in the case $x = y$ (when all edges are $x$-loops). Let the vertices of $G$ be $x, v_1, \dotsc, v_{n - 1}$. We build the frame $F_n$ exactly as before but with $v_{n - 1}$ replacing $y$ when we build $F_n$ from $F_{n - 1}$. Again $F_n$ satisfies the following properties:
\begin{itemize}
    \item $F_n$ is a plane tree with at most $2n$ vertices.
    \item $V(G) \subseteq V(F_n)$ and all vertices in $V(F_n) \setminus V(G)$ have degree at least three.
    \item Each edge of $G - \set{e_1, \dotsc, e_{n - 1}}$ intersects $\CC(E(F_n))$ at most $k(n - 1)$ times.
\end{itemize}
We construct the frame $F$ from $F_n$ exactly as before. The process stops when, in $F$, every side sees at most nine sides across the face to which it is incident. Again, by \cref{PlanarLongFaces}, the number of steps in this process is at most $5(n - 1)$ and the final frame has at most $10n$ vertices. The final frame $F$ and final set of edges $E'$ satisfy the same properties as before.

We now encode each edge $e \in E(G) - E'$. Direct each such edge and encode them as before by writing down the signature $a_1, f_1^{\bullet_1}, \dotsc, a_{\ell}, f_{\ell}^{\bullet_{\ell}}, a_{\ell + 1}$ where the $f_i$ are the sequence of edges of $F$ that $e$ crosses and the $a_i$ are either winding numbers (if $e$ is crossing the unbounded face $C_{\infty}$) or are $\ast$ (if $e$ is crossing an internal face). The only difference now is that there is no need for the curves $P_{y, f}$ and points $p_{y, f}$.

We then finish the argument as before.
\end{proof}

\Cref{NonHomotopic} applies for $k \geq 1$. The $k = 0$ case (with no crossings) is handled by the following elementary result.

\begin{proposition}\label{NonHomotopicPlaneMultigraph}
    Every non-homotopic plane multigraph on $n \geq 2$ vertices has at most $4n - 4$ edges.
\end{proposition}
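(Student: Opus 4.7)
The plan is to prove the equivalent inequality $f\le 3n-2$ using Euler's formula. We may assume $G$ is connected (otherwise treat each connected component separately, being careful that the induced drawings on individual components are non-homotopic with respect to the full vertex set). For connected $G$, Euler's formula gives $f = m - n + 2$, so the desired bound $m \le 4n-4$ is equivalent to $f \le 3n-2$.

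The non-homotopic hypothesis sharply restricts the small faces of $G$. First, no face of length~$2$ can be bounded by two parallel non-loop edges between a pair $u,v$: its interior would be an empty lens, forcing the two edges to be homotopic. Similarly, a length-$2$ face bounded by two nested loops at a common vertex is impossible, since the empty annular region between them would force the two loops to enclose the same set of other vertices. Hence in the connected case with $n\ge 3$ the only length-$2$ faces that remain are bounded by two loops at a vertex in a figure-$8$ configuration, and such a face must be the outer face; in particular $f_2\le 1$. Second, each length-$1$ face is bounded by a contractible loop (a loop whose interior contains no other vertex), and each vertex hosts at most one such loop, since any two contractible loops at the same vertex are homotopic to the trivial loop in the punctured plane; this gives $f_1\le n+1$ (with one possible extra length-$1$ face at the outer face if some loop encloses everything else).

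One natural route to combine these bounds is to transform $G$ into a plane multigraph $G^{\star}$ whose faces all have length at least~$3$, by adding inside each length-$1$ or length-$2$ face of $G$ a dummy vertex together with one new edge joining it to a boundary vertex. This adds $f_1+f_2$ vertices and $f_1+f_2$ edges while leaving $f$ unchanged, so the standard Euler bound ``all faces of length $\ge 3$ implies $|E|\le 3|V|-6$'' applied to $G^{\star}$ gives $m \le 3n + 2(f_1+f_2) - 6$. Combined with $f_1\le n+1$ and $f_2\le 1$ this alone yields only $m\le 5n-2$, which falls short of the target by about $n$. The main obstacle is to recover this remaining slack: this must come from exploiting that a bridge of $G$ is traversed on both sides of a single face, forcing that face to have length at least~$4$, and that the contractible loops and the bridges incident to the length-$1$ faces interact enough to produce such large neighbouring faces. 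The proof will therefore either carry out a more refined discharging over faces in which the excess length from bridge-bearing faces compensates the $-5$ charge of each length-$1$ face, or equivalently proceed by induction on the number of bridges (removing a bridge disconnects $G$ into two smaller non-homotopic plane multigraphs on which the inductive hypothesis applies, with enough slack to account for the deleted edge).
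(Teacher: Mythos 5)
Your structural observations are sound -- you correctly identify that non-homotopy forbids empty lenses and empty nested loops, and that each vertex hosts at most one ``contractible'' (trivial) loop -- but the proof as written has a genuine gap that you yourself flag: the dummy-vertex reduction only yields $m \leq 5n - 2$, and you do not carry out either of the two ways you suggest to recover the missing slack of roughly $n$. A sketch ending with ``the proof will therefore either carry out a more refined discharging \ldots\ or equivalently proceed by induction on the number of bridges'' is not yet a proof.

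The paper closes this gap with a cleaner decomposition than the dummy-vertex idea. Instead of keeping the trivial loops in the graph and then trying to pay for the length-$1$ and length-$2$ faces they create, it \emph{deletes all trivial loops first}. There are at most $n$ of them (one per vertex, by the same homotopy argument you give), so $m \leq m' + n$ where $m'$ is the edge count of the residual connected plane multigraph $G'$. In $G'$ every face except possibly the outer one has length at least $3$: a short face cannot enclose a vertex (connectedness would force a third boundary edge), so its boundary is a single loop or a parallel pair; but trivial loops are gone and parallel edges bounding an empty lens would be homotopic, so such a face can only be the outer one. Summing face lengths and applying Euler then gives $m' \leq 3n - 4$, hence $m \leq 4n - 4$. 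The point is that peeling off the trivial loops up front is exactly the bookkeeping your proposed discharging would have to replicate, but it does so in one clean step without needing to track bridges or analyse interactions between small faces. If you want to complete your own route, you would need to actually prove that every length-$1$ face is adjacent to enough long faces to discharge onto, which is delicate; adopting the paper's ``delete trivial loops, then Euler'' split is simpler.
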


\begin{proof}
    Let $G$ be a non-homotopic plane multigraph on $n \geq 2$ vertices with $m$ edges. By adding edges, we may assume that $G$ is connected. 

A loop of $G$ is  \defn{trivial} if it has no vertices in its interior. Any two trivial loops incident to the same vertex are homotopic, so $G$ has at most $n$ trivial loops. Let $G'$ be obtained from $G$ by deleting all 
the trivial loops, and let $m' \coloneqq \abs{E(G')}$. Then $m \leqslant m' + n$. We claim that $m' \leq 3n - 4$ which will suffice. Since $G$ is connected, so too is $G'$. Define the \defn{length} of a face $F$ to be the number of edges (counting multiplicity) in a walk around $F$. As an example, the length of the face $F$ shown in \cref{eF} is 3 (the edge $e$ is traversed twice, once along each side, in a walk around $F$).

\begin{figure}[!htb]
	\centering
	\includegraphics{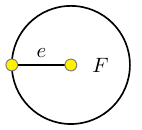}
	\caption{A face $F$ of length $3$.}
	\label{eF}
\end{figure}

Consider a face $F$ of length at most 2. If $F$ has a vertex in its interior, then, since $G'$ is connected, there is an edge from an interior vertex to a vertex on the boundary of $F$ and so $F$ has length at least 3. Thus $F$ has no vertex in its interior. The boundary of $F$ must either be a loop or a pair of parallel edges between two vertices $u$ and $v$. Since $G'$ contains no trivial loops and parallel edges are not homotopic, $F$ must be the outer face of $G'$. In particular, every face of $G'$ except the outer face has length at least 3. Consider the sum of the lengths of the faces of $G'$. Each edge contributes 2 to this sum and so the sum is equal to $2m'$. On the other hand, each face, except the outer face, contributes 3. Since $n \geq 2$ and $G'$ is connected, the outer face contributes at least 1. In particular, the sum is at least $3f - 2$ where $f$ is the number of faces of $G'$. Thus $3f - 2 \leq 2m'$. By Euler's formula, $2 + m' - n \leq f \leq \frac{2}{3}(m' + 1)$, and so $m' \leq 3n - 4$, as desired.
\end{proof}

The bound in \cref{NonHomotopicPlaneMultigraph} is tight. For $n = 2$, take the multigraph on $\set{u, v}$ consisting of edge $uv$, trivial loops at $u$ and $v$ as well as a loop at $u$ that has all the other edges in its interior. For $n \geq 3$, start with any simple plane triangulation on $n$ vertices, with the outer face bounded by the triangle $(u, v, w)$. Add a parallel edge $uv$ so that $w$ is no longer on the outer face boundary. Add a loop incident to $u$ so that $v$ is no longer on the outer face boundary. Finally, for each vertex $x$ add a loop incident to $x$ drawn in an internal face incident to $x$. The resulting drawing is non-homotopic, and with $(3n - 6) + 1 + 1 + n = 4n - 4$ edges.

\section{Construction of Monotone Drawings}\label{sec:MonotoneExample}

Here we give a construction showing the tightness of \cref{MonotoneNonHomotopic,MonoNonHomCombinedCrossingLowerBound}.
For integers $1 \leq k < n$ define the multigraph $G_{n, k}$ as follows, and illustrated in \cref{MonotoneExample}. It has vertex set $V(G_{n, k}) = \set{v_1,\dots,v_n}$ where each vertex $v_i$ is drawn at $(i, 0)$ in the plane. For integers $1 \leq a_0 < a_1< \dotsb < a_{\ell} \leq n$ where $1 \leq \ell \leq k$, add to $G_{n, k}$ an edge $e = e_{a_0, a_1, \dotsc, a_{\ell}}$ with endpoints $v_{a_0}$ and $v_{a_{\ell}}$, drawn as a degree-$(k + 1)$ monic polynomial
\begin{equation*}
    y = P_e(x) \coloneqq x^{k - \ell} (x - a_0)(x - a_1 - \tfrac12)(x - a_2 - \tfrac12) \dotsm (x - a_{\ell - 1} - \tfrac12)(x - a_{\ell})
\end{equation*}
with domain $a_0 \leq x \leq a_{\ell}$. This is a monotone curve crossing the x-axis at $(a_i + \frac{1}{2},0)$ for each $i \in \set{1, \dots, \ell - 1}$. The resulting multigraph $G_{n, k}$ has $\sum_{\ell = 1}^k \binom{n}{\ell + 1} = \Theta_k(n^{k + 1})$ edges.

\begin{figure}[ht]
    \centering
    \includegraphics{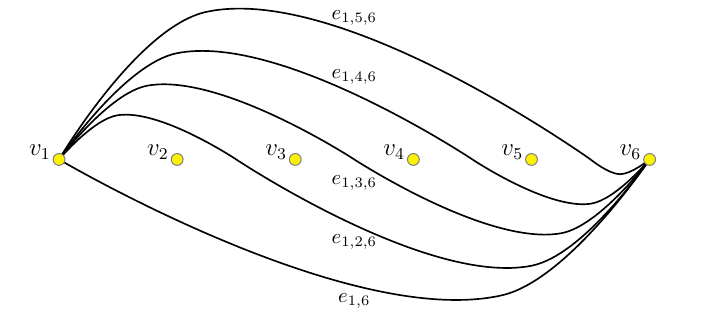}
    \caption{Edges between $v_1$ and $v_6$ in $G_{n, 2}$.}
    \label{MonotoneExample}
\end{figure}

Suppose that two edges $e = e_{s, a_1, \dotsc, a_{\ell - 1}, t}$ and $f = e_{s, b_1, \dotsc, b_{\ell' - 1}, t}$ between vertices $v_s$ and $v_t$ are homotopic. Then the signs of $P_e$ and $P_f$ must agree at each integer $i$ strictly between $s$ and $t$. Every crossing of one of the polynomials with the x-axis occurs at $x = m + \frac{1}{2}$ for $m \in \set{s + 1, \dotsc, t - 1}$ and all such crossings except $t - \frac{1}{2}$ correspond to a change in sign of the polynomial. Hence $P_e$ and $P_f$ must cross the x-axis at the same places (except possibly for $t - \frac{1}{2}$). In particular, the sets $\set{a_1, \dotsc, a_{\ell - 1}}$ and $\set{b_1, \dotsc, b_{\ell' - 1}}$ are the same or differ by the element $t - 1$. If they are the same, then $e = f$. If they differ by the element $t - 1$, then WLOG $\ell' = \ell - 1$, $a_1 = b_1$, \ldots, $a_{\ell - 2} = b_{\ell - 2}$, and, $a_{\ell - 1} = t - 1$. But then $xP_e(x) = (x - t + \frac{1}{2}) P_f(x)$ and so $P_e$ and $P_f$ differ in sign at $x = t - 1$ and so $e$ and $f$ are not homotopic. Therefore, the drawing of $G_{n, k}$ given is indeed non-homotopic.

Consider two distinct edges $e$ and $f$ of $G$. Both $P_e$ and $P_f$ are monic polynomials of degree $k + 1$, so their difference is a polynomial of degree at most $k$, which has at most $k$ roots. Thus the edges $e$ and $f$ cross at most $k$ times. Now suppose that edges $e$ and $f$ are incident at vertex $v_a$. Then both $e$ and $f$ are monic polynomials of degree $k + 1$ with a root at $a$, so their difference is a polynomial of degree at most $k$ with a root at $a$. The root at $a$ is not a crossing point of $e$ and $f$, so $e$ and $f$ cross at most $k - 1$ times. Similarly if $e$ and $f$ are parallel, then they cross at most $k - 2$ times. In particular,
\begin{itemize}
    \item $G_{n, k}$ has a monotone non-homotopic drawing in which each pair of edges cross at most $k$ times,
    \item $G_{n, k + 1}$ has a monotone non-homotopic drawing in which each pair of incident edges cross at most $k$ times,
    \item $G_{n, k + 2}$ has a monotone non-homotopic drawing in which each pair of parallel edges cross at most $k$ times.
\end{itemize}
Recall that $G_{n, k}$ has $\sum_{\ell = 1}^k \binom{n}{\ell + 1} \geq \binom{n}{k + 1}$ edges while the upper bounds from \cref{MonotoneNonHomotopic} parts \ref{edge:all}, \ref{edge:incident}, and \ref{edge:parallel} are $2 \cdot \binom{2n}{k + 1}$, $n \cdot \binom{2n}{k + 1}$, and $n (n - 1) \cdot \binom{2n}{k + 1}$, respectively.
This shows that the dependence on $n$ is best possible in \cref{MonotoneNonHomotopic} except for the case $k = 0$ in part~\ref{edge:all}. For this particular case we note that any straight-line plane triangulation gives a monotone non-homotopic drawing with $3n - 6$ edges and no crossings.

Next we address the tightness of \cref{MonoNonHomCombinedCrossingLowerBound}. Let $t \in \set{2k, \dotsc, n}$. Let $G_{n, k, t}$ be the sub-multigraph of $G_{n, k}$ consisting of those edges $e_{a_0, \dotsc, a_{\ell}}$ with $a_{\ell} - a_0 \leq t$. Each vertex has degree between $\sum_{\ell = 1}^k \binom{t}{\ell}$ and $2 \sum_{\ell = 1}^k \binom{t}{\ell}$, that is, has degree $\Theta_k(t^k)$. Thus $G_{n, k, t}$ has $m = \Theta_k(n t^k)$ edges.

Fix an edge $e = e_{a_0, \dotsc, a_{\ell}}$. Suppose an edge $f = e_{b_0, \dotsc, b_{\ell}}$ crosses $e$. Then $b_0 < a_{\ell}$ and $b_{\ell} > a_0$. Since $a_{\ell} - a_0 \leq t$ and $b_{\ell} - b_0 \leq t$, we have $b_0 \geq b_{\ell} - t > a_0 - t$ and $b_0 < a_{\ell} \leq a_0 + t$. In particular, there are at most $2t$ choices for $b_0$. Each vertex has degree $\Theta_k(t^k)$, so $e$ crosses at most $\OO_k(t^{k + 1})$ other edges. Hence the number of crossings in $G_{n, k, t}$ is at most $m \cdot \OO_k(t^{k + 1}) = \OO_k(n t^{2k + 1})$. But $n t^{2k + 1} = (n t^k)^{2 + 1/k}/n^{1 + 1/k} = \Theta_k(m^{2 + 1/k}/n^{1 + 1/k})$, so $G_{n, k, t}$ has $\OO_k(m^{2 + 1/k}/n^{1 + 1/k})$ crossings.

Thus $G_{n, k, t}$ shows \cref{MonoNonHomCombinedCrossingLowerBound}\ref{crossing:all} is tight where $m$ can be chosen independently of $n$ by varying $t$. $G_{n, k + 1, t}$ and $G_{n, k + 2, t}$ show that parts \ref{crossing:incident} and \ref{crossing:parallel} are tight, respectively.

%%%%%%%%%%%%%%%%%%%%%%%%%%%%%%%%%%%%%%%%%%
\section{Number of Edges in Monotone Drawings}\label{sec:mono}

In this section we prove \cref{MonotoneNonHomotopic}. The main work is part \ref{edge:all}; the other two parts then follow quickly.

\begin{proof}[Proof of \cref{MonotoneNonHomotopic}\ref{edge:incident} and \ref{edge:parallel} assuming \cref{MonotoneNonHomotopic}\ref{edge:all}]
    Let $G$ be an $n$-vertex multigraph with a monotone non-homotopic drawing in which each pair of incident edges cross at most $k$ times. For each vertex $x$, let $G_x$ denote the sub-multigraph of $G$ consisting of those edges incident to $x$. Then $G_x$ is an $n$-vertex multigraph with a monotone non-homotopic drawing in which each pair of edges cross at most $k$ times. By \cref{MonotoneNonHomotopic}\ref{edge:all}, $\abs{E(G_x)} \leq 2 \cdot \binom{2n}{k + 1}$, implying 
    \begin{equation*}
        \abs{E(G)} = \tfrac{1}{2} \sum_{x \in V(G)} \abs{E(G_x)} \leq n \cdot \binom{2n}{k + 1},
    \end{equation*}
    which proves \cref{MonotoneNonHomotopic}\ref{edge:incident}.

    Now let $G$ be an $n$-vertex multigraph with a monotone non-homotopic drawing in which each pair of parallel edges cross at most $k$ times. For each pair of vertices $\set{x, y} \in \binom{V(G)}{2}$, let $G_{x, y}$ denote the sub-multigraph of $G$ consisting of those edges between $x$ and $y$. By \cref{MonotoneNonHomotopic}\ref{edge:all}, $\abs{E(G_{x, y})} \leq 2 \cdot \binom{2n}{k + 1}$. Thus,
    \begin{equation*}
        \abs{E(G)} = \sum_{\set{x, y} \in \binom{V(G)}{2}} \abs{E(G_{x, y})} \leq n (n - 1) \cdot \binom{2n}{k + 1},
    \end{equation*}
    which proves \cref{MonotoneNonHomotopic}\ref{edge:parallel}.
\end{proof}

It remains to prove \cref{MonotoneNonHomotopic}\ref{edge:all}. To do so we first encode the edges in the monotone non-homotopic $k$-crossing drawing of $G$ by a sequence of length $n$ and then bound the number of such sequences. We consider sequences in which each entry is in $\set{+,0,-,\ast}$. Such a sequence is a \defn{monotone drawing sequence} if it is of one of the two following forms:
\begin{align*}
    & \underbrace{\ast, \dotsc, \ast}_{\geq 0}, 0, \underbrace{\pm, \dotsc, \pm}_{\geq 0}, 0, \underbrace{\ast, \dotsc, \ast}_{\geq 0} \\[4pt]
    & \underbrace{\pm, \dotsc, \pm}_{\geq 0}, 0, \underbrace{\ast, \dotsc, \ast}_{\geq 0}
\end{align*}
where $\pm$ denotes that the entry is either a $+$ or a $-$ (independently between different symbols). In other words, there are two types of sequences: those with two 0's which have a sequence of $+$ and $-$ symbols between them and $\ast$'s at both ends, and those with one 0 which has a sequence of $+$ and $-$ symbols before and a sequence of $\ast$'s after. We give some intuition for these two sequence types. The first type encodes the edges in a monotone drawing: the 0s correspond to the end-vertices of the edges and the $\pm$ to whether the edge is above or below the vertices in between. The second type is needed for bounding the number of monotone drawing sequences: we will do so by induction, removing the first entry which can convert a sequence of the first type into one of the second.

Partially order the four symbols, $- < 0 < +$, where $\ast$ is incomparable with the others. Given two monotone drawing sequences $(a_i)$ and $(b_i)$ and an index $j$ we say that \defn{$(a_i)$ is below $(b_i)$ at $j$} if $a_j < b_j$ and \defn{$(a_i)$ is above $(b_i)$ at $j$} if $a_j > b_j$. Note that if $a_j$ or $b_j$ is a $\ast$, then the sequences are incomparable or equal at $j$.

Two sequences $(a_i)$ and $(b_i)$ \defn{cross $k$ times} if there are indices $i_0 < i_1 < \dotsb < i_k$ such that $(a_i)$ is below $(b_i)$ at $i_0, i_2, i_4, \dotsc$ and $(a_i)$ is above $(b_i)$ at $i_1, i_3, i_5, \dotsc$ or vice versa. Note that if $(a_i)$ and $(b_i)$ have length $n$, then they cross at most $n - 1$ times.

\begin{lemma}
\label{drawingtosequences}
If an $n$-vertex multigraph $G$ has a monotone drawing in the plane such that no edges are homotopic and every pair of edges cross at most $k$ times, then there exists a set of $\abs{E(G)}$ monotone drawing sequences of length $n$ any two of which cross at most $k$ times. 
\end{lemma}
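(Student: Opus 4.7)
The plan is to order the vertices by x-coordinate and encode each edge as a length-$n$ sequence recording, at each vertex's x-coordinate, which side of the vertex the edge passes on (or whether the edge is incident to or absent from that x-coordinate). After a small generic perturbation, relabel the vertices as $v_1, \dotsc, v_n$ so that their x-coordinates satisfy $x_1 < \dotsb < x_n$, and write $y_i$ for the y-coordinate of $v_i$. For an edge $e$ with endpoints $v_a, v_b$ (with $a < b$), x-monotonicity yields a continuous function $\phi_e \colon [x_a, x_b] \to \RR$ giving the y-coordinate of $e$ at each horizontal position in its range. Define $\sigma_e \in \set{+, 0, -, \ast}^n$ by $\sigma_e(i) = \ast$ for $i \notin [a, b]$, $\sigma_e(i) = 0$ for $i \in \set{a, b}$, and, for $a < i < b$, $\sigma_e(i) = +$ or $-$ according to the sign of $\phi_e(x_i) - y_i$. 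Each $\sigma_e$ visibly has the shape $\ast, \dotsc, \ast, 0, \pm, \dotsc, \pm, 0, \ast, \dotsc, \ast$ and so is a monotone drawing sequence of the first listed form.

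For injectivity, if $\sigma_e = \sigma_{e'}$ then the coinciding $0$'s force $e$ and $e'$ to share endpoints $v_a, v_b$, and the matching $\pm$ entries force $\phi_e(x_i)$ and $\phi_{e'}(x_i)$ to lie on the same side of $y_i$ for every $a < i < b$. The straight-line interpolation $H_t(x) = (1 - t)\phi_e(x) + t \phi_{e'}(x)$ then provides a homotopy from $e$ to $e'$ in $\RR^2 \setminus (V(G) \setminus \set{v_a, v_b})$: vertices outside $[x_a, x_b]$ are avoided since the homotopy lives in that x-strip, and intermediate vertices $v_i$ are avoided since the convex combination at $x_i$ stays strictly on the common side of $y_i$. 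Non-homotopicity then gives $e = e'$.

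For the crossing bound, suppose $\sigma_e$ and $\sigma_{e'}$ cross $m$ times with witnesses $i_0 < \dotsb < i_m$ at which the sequences alternate strictly. Since $\ast$ is incomparable with every symbol, no witness carries $\ast$, so both $\phi_e, \phi_{e'}$ are defined at every $x_{i_j}$ and $[x_{i_0}, x_{i_m}]$ lies in the common x-range of $e$ and $e'$. A short case analysis using $- < 0 < +$ shows that $\sigma_e(i_j) < \sigma_{e'}(i_j)$ strictly implies $\phi_e(x_{i_j}) < \phi_{e'}(x_{i_j})$ (and symmetrically for $>$). Hence $\phi_e - \phi_{e'}$ changes strict sign between consecutive witnesses, and the intermediate value theorem yields a crossing of $e, e'$ in each of the $m$ disjoint open intervals $(x_{i_{j-1}}, x_{i_j})$; these are genuine crossings since each zero lies strictly interior to its interval and no edge passes through a vertex. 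Therefore $m \leq k$.

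The main obstacle is the case analysis in the crossing step: the partial order on $\set{+, 0, -, \ast}$ must mirror edge-height comparison exactly, including the delicate cases where one entry is $0$ (signalling a shared endpoint rather than a strict side). Checking this is what guarantees that sequence crossings translate into genuine drawing crossings, and it is the only point where the definition of the partial order enters nontrivially; elsewhere the argument is routine bookkeeping about x-ranges and a standard convex-combination homotopy.
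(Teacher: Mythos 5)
Your proposal is correct and follows essentially the same approach as the paper: perturb so vertices have distinct x-coordinates, record at each vertical line $L_i$ whether an edge is absent ($\ast$), at the vertex ($0$), or above/below ($\pm$), and observe that equality of sequences contradicts non-homotopy while alternations force crossings. The paper's proof is terser at two points where you supply more detail — it simply asserts injectivity from the non-homotopic hypothesis (you construct the explicit straight-line homotopy proving the contrapositive) and it asserts that an alternation forces a crossing (you carry out the sign-versus-order case analysis and the intermediate value argument); both of your additions are correct and genuinely fill in what the paper leaves to the reader.
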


\begin{proof}
We may perturb the vertices so that no two vertices have the same x-coordinate. Let $v_1, \dotsc, v_n$ be the vertices of $G$ ordered by increasing x-coordinate. Let $L_i$ be the vertical line through $v_i$. Consider each edge $e$ with endpoints $v_i$ and $v_j$ with $i < j$. For $\ell \in \set{1,\dotsc,i - 1} \cup \set{j + 1,\dotsc, n}$, let $s_{e, \ell}= *$. Let $s_{e, i} = 0$ and $s_{e, j} = 0$. 
For  $\ell \in \set{i + 1, \dots,j - 1}$, let $s_{e, \ell} = +$ if $e$ crosses $L_\ell$ above $v_\ell$, and let $s_{e,\ell}= -$ if $e$ crosses $L_\ell$ below $v_\ell$. Since $e$ is x-monotone, $s_{e, \ell}$ is well-defined. Let $s_e \coloneqq (s_{e, 1}, \dotsc, s_{e, n})$. By construction, $s_e$ is a monotone drawing sequence. Fix distinct edges $e, f \in E(G)$. Since $e$ and $f$ are not homotopic, $s_e \neq s_f$. Also, if there are indices $i < j$ with $s_e$ above $s_f$ at $i$ and $s_e$ below $s_f$ at $j$, then $e$ and $f$ must have a crossing whose x-coordinate is strictly between $i$ and $j$. Hence, $s_e$ and $s_f$ cross at most $k$ times.
\end{proof}

\begin{lemma}\label{sequencebound}
    Let $k < n$ be positive integers. Let $\AA$ be a set of different monotone drawing sequences of length $n$, each pair of which cross at most $k$ times. Then
    \begin{equation*}
        \abs{\AA} \leq 2 \cdot \binom{2n}{k + 1}.
    \end{equation*}
\end{lemma}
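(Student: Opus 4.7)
The target bound of $2\binom{2n}{k+1}$ is highly suggestive: it points toward building an injective map $\AA \hookrightarrow \binom{[2n]}{k+1} \times \{0,1\}$, with the factor of $2$ coming from a sign bit and the binomial from choosing $k+1$ out of $2n$ ``slots.'' My plan is to construct this injection in two stages: a doubling reduction that simplifies the alphabet, followed by an encoding of the reduced sequences.

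For the reduction, I would define, for each $s \in \AA$, a length-$2n$ sequence $\tilde s$ over the three-letter alphabet $\{-,+,*\}$ by splitting each position $\ell$ into two sub-positions $(2\ell-1,2\ell)$ and using the rule $s_\ell = \pm \mapsto (\pm,\pm)$, $s_\ell = 0 \mapsto (-,+)$, $s_\ell = * \mapsto (*,*)$. A direct case analysis (using $-<0<+$ and $*$ incomparable with the others) then shows that the pairwise crossing count is preserved: if $s,t$ cross $k$ times, so do $\tilde s,\tilde t$, and vice versa. This reduces the problem to bounding the size of a family $\BB\subseteq\{-,+,*\}^{2n}$ with pairwise at most $k$ crossings by $2\binom{2n}{k+1}$.

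To encode each $\tilde s \in \BB$, I would fix a canonical reference $r$ (either a specific element of $\BB$ chosen by a canonical rule, or an external reference such as a specific $\{-,+\}$-string), and assign to $\tilde s$ the pair $(X_{\tilde s},\epsilon_{\tilde s})$ where $X_{\tilde s}$ records the positions of the at most $k$ sign-changes in $c_{r,\tilde s}$ together with appropriate boundary markers (entries/exits of the support of $\tilde s$), and $\epsilon_{\tilde s}$ records the leading non-zero sign. Because $r$ and $\tilde s$ cross at most $k$ times, $c_{r,\tilde s}$ has at most $k$ sign changes, and a careful bookkeeping of boundary events should be calibratable so that $|X_{\tilde s}|=k+1$.

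The principal obstacle is proving injectivity of this encoding. If two distinct sequences $\tilde s \neq \tilde t$ in $\BB$ yield the same pair $(X,\epsilon)$, I plan to combine their shared ``alternation skeleton'' relative to $r$ with the fact that $\tilde s$ and $\tilde t$ differ somewhere, to produce an alternating subsequence of length $k+2$ in $c_{\tilde s,\tilde t}$, contradicting the pairwise crossing bound. The delicate subtlety here is the $*$ symbol: since $*$ is incomparable with $-,+$, it does not register sign changes in comparisons, so the encoding must be tailored so that positions with $*$ do not erase alternation witnesses between $\tilde s$ and $\tilde t$. Calibrating the boundary-transition markers to absorb this effect — and thereby hitting the bound $2\binom{2n}{k+1}$ exactly — is the crux of the argument and the step that I expect to require the most care.
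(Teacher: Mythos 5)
Your approach is genuinely different from the paper's. The paper defines $g(n,k)$ to be the maximum family size, proves the recurrence $g(n+1,k) \leq g(n,k) + 2g(n,k-1) + 2$ by deleting the first symbol of every sequence and carefully classifying collisions among the truncated sequences (the key observation being that two distinct originals whose truncations coincide after deleting a $+$ and a $0$/$-$, say, must have crossed at most $k-1$ times before truncation), and then verifies the closed form $2\binom{2n}{k+1}$ against the recurrence via Pascal's rule. No doubling, no global injection, and no reference sequence appear anywhere in the paper.

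Your proposal, by contrast, aims for a one-shot injection $\AA \hookrightarrow \binom{[2n]}{k+1}\times\{0,1\}$, and as written it has a real gap. Concretely: after your doubling reduction (which does look sound --- a position-by-position case check shows the map $0\mapsto(-,+)$, $\pm\mapsto(\pm,\pm)$, $\ast\mapsto(\ast,\ast)$ never creates nor destroys an alternation), you propose to record, for each $\tilde s$, the positions of its sign changes relative to a reference $r$ together with ``boundary markers'' for the support of $\tilde s$. But a doubled sequence's support is an arbitrary even-length sub-interval of $[2n]$, so encoding it already requires two positions, and the sign changes against $r$ can number up to $k$; that is $k+2$ slots, not the $k+1$ you need, and you give no mechanism by which one slot can always be saved. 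Worse, the choice of $r$ is unresolved in a way that matters: a fixed external reference such as an all-$+$ string never registers any crossings (the comparison only ever takes the values below/equal/incomparable, so the alternation count is trivially $0$ and tells you nothing about the $\pm$ pattern of $\tilde s$), while a reference taken from $\AA$ places no constraint on the many members of $\AA$ whose supports are disjoint from that of $r$, and those members would then have to be distinguished by their support intervals alone --- more than $\binom{2n}{1}$ possibilities when $k=0$. The injectivity sketch (derive a length-$(k+2)$ alternation in $c_{\tilde s,\tilde t}$ from a coincidence of encodings) is the right intuition but is precisely the part you flag as unresolved; as it stands the argument does not go through.

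If you want to pursue an injection-style proof, I'd suggest first trying to re-derive the paper's recurrence in your doubled world, or alternatively targeting the weaker bound $2\sum_{j\le k+1}\binom{2n}{j}$ (which your scheme with $\le k+2$ slots plausibly gives) and then seeing whether the slot-saving can be made precise. But as submitted, the calibration to exactly $k+1$ positions and the handling of supports/$\ast$'s are genuine gaps, not mere details.
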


\begin{proof}
    Let $g(n, k)$ denote the size of the largest set of length $n$ monotone drawing sequences where every pair crosses at most $k$ times. In this definition we allow $k \geq n$, in which case $g(n, k) = g(n, n - 1)$, since no two monotone drawing sequences of length $n$ cross $n$ times. We also allow $k = -1$, in which case $g(n, -1) = 1$ (as no two sequences cross at most $-1$ times).

    Let $\AA$ be a set of monotone drawing sequences of length $n + 1$, such that every pair crosses at most $k$ times. The proof will proceed by deleting the first entry of each sequence in $\AA$ and analysing how many sequences elide.

    Let $\BB$ be those sequences of $\AA$ which, upon deleting their first entry, are no longer monotone drawing sequences. The only sequence in $\BB$ is $0, \ast, \ast, \dots, \ast$ and so $\abs{\BB} \leq 1$. Let $\AA_{+}, \AA_{0}, \AA_{-}, \AA_{\ast}$ be those sequences in $\AA - \BB$ which start with a $+$, $0$, $-$, $\ast$, respectively. Let $\AA'_{+}$ be those sequences obtained by deleting the first entry (which, of course, is a $+$) from sequences in $\AA_{+}$. Similarly define $\AA'_{0}, \AA'_{-}, \AA'_{\ast}$. Note that each of these is a set of monotone drawing sequences of length $n$. These sets may not be disjoint. Let $\AA' = \AA'_{+} \cup \AA'_{0} \cup \AA'_{-} \cup \AA'_{\ast}$ be those sequences obtained by deleting the first entry from a sequence in $\AA - \BB$. Note $\AA'$ is a set of monotone drawing sequences of length $n$ where every pair of sequences crosses at most $k$ times and so $\abs{\AA'} \leq g(n, k)$.

    In any monotone drawing sequence, the symbol $\ast$ can be only be succeeded or preceded by a $0$ or $\ast$. In particular, the first entry of a sequence in $\AA'_{\ast}$ is a $\ast$ or a $0$. If a sequence in $\AA'_{+} \cup \AA'_{0} \cup \AA'_{-}$ starts with a $\ast$, then the original sequence (before deleting the first entry) must have been $0, \ast, \ast, \dotsc, \ast$ and this was discounted when we removed $\BB$. Further, the only sequence in $\AA'_{+} \cup \AA'_{0} \cup \AA'_{-}$ starting with a $0$ is $0, \ast, \ast, \dotsc, \ast$. Hence, $\abs{\AA'_{\ast} \cap (\AA'_{+} \cup \AA'_{0} \cup \AA'_{-})} \leq 1$.

    We now consider those sequences that are in $\AA'_{+} \cap (\AA'_{0} \cup \AA'_{-})$ and show that every pair of them crosses at most $k - 1$ times. Indeed, suppose that $(a_i), (b_i) \in \AA'_{+} \cap (\AA'_{0} \cup \AA'_{-})$ cross $k$ times. Let $j$ be the first index where $a_j \neq b_j$.  We first show that neither $a_j$ nor $b_j$ is a $\ast$. Suppose that $a_j = \ast$. Since $(a_i) \in \AA'_{+} \cup \AA'_{0} \cup \AA'_{-}$, its first entry is a $+$, $0$, or $-$. Hence, from the definition of monotone drawing sequences, there is some $\ell < j$ with $a_\ell = 0$. By the minimality of $j$, $b_\ell$ is also $0$. But $(b_i) \in \AA'_{+} \cup \AA'_{0} \cup \AA'_{-}$, so every entry after $b_\ell$ is a $\ast$ and so $b_j = \ast$, contradicting $a_j \neq b_j$. 

    Since $a_j$, $b_j$ are distinct and neither is $\ast$, $(a_i)$ is either above or below $(b_i)$ at $j$. Without loss of generality $(a_i)$ is below $(b_i)$ at $j$. But $(a_i)$ and $(b_i)$ agree before $j$ and cross $k$ times, so there are indices $j = i_1 < i_2 < \dotsb < i_{k + 1}$ with $(a_i)$ below $(b_i)$ at $i_1, i_3, \dotsc$ and $(a_i)$ above $(b_i)$ at $i_2, i_4, \dotsc$. As $(a_i) \in \AA'_{+}$, the sequence obtained by appending a $+$ to the start of $(a_i)$ is in $\AA$, while, since $(b_i) \in \AA'_{0} \cup \AA'_{-}$, the sequence obtained by appending a $0$ or $-$ (depending on whether $(b_i) \in \AA'_{0}$ or $(b_i) \in \AA'_{-}$) to the start of $(b_i)$ is in $\AA$. These two sequences cross $k + 1$ times, a contradiction.

    Hence, $\AA'_{+} \cap (\AA'_{0} \cup \AA'_{-})$ is a set of monotone drawing sequences of length $n$, every pair of which cross at most $k - 1$ times and so $\abs{\AA'_{+} \cap (\AA'_{0} \cup \AA'_{-})} \leq g(n, k - 1)$. Similarly, $\abs{\AA'_{-} \cap (\AA'_{0} \cup \AA'_{+})} \leq g(n, k - 1)$. Hence, the number of sequences in at least two of $\AA'_{+}, \AA'_{0}, \AA'_{-}, \AA'_{\ast}$ is at most $2 g(n, k - 1) + 1$. By the inclusion-exclusion principle we have
    \begin{align*}
       \abs{\AA}  
       = \abs{\AA'_{+}} + \abs{\AA'_{0}} + \abs{\AA'_{-}} + \abs{\AA'_{\ast}} + \abs{\BB} 
        & \leq \abs{\AA'} + 2g(n, k - 1) + 2 \\
        & \leq g(n, k) + 2g(n, k - 1) + 2.
    \end{align*}
    Hence, $g(n + 1, k) \leq g(n, k) + 2g(n, k - 1) + 2$. It suffices to show that $g(n, k) \leq 2 \cdot \binom{2n}{k + 1}$ for all $-1 \leq k \leq n - 1$. There are some easy base cases:
    \begin{itemize}
        \item $g(n, -1) = 1$ for all $n$ (any two sequences cross at least 0 times);
        \item $g(1, k) = 1$ for all $k \geq 0$ (the only monotone drawing sequence of length 1 is $0$);
        \item $g(2, k) = 4$ for all $k \geq 0$ (the only monotone drawing sequences of length 2 are $+, 0$; $0,0$; $-, 0$; and $0,\ast$).
    \end{itemize}
    These and the recurrence give $g(n, 0) \leq 4n - 4$ for $n \geq 2$. Hence, the required inequality holds whenever $n \leq 2$ or $k \leq 0$. Suppose the result holds for $n \geq 2$. First suppose that $1 \leq k \leq n - 1$. Then
    \begin{align*}
        g(n + 1, k) & \leq g(n, k) + 2g(n, k - 1) + 2 \leq 2 \tbinom{2n}{k + 1} + 4 \tbinom{2n}{k} + 2 \\
        & \leq 2 \tbinom{2n}{k + 1} + 4 \tbinom{2n}{k} + 2 \tbinom{2n}{k - 1} = 2 \tbinom{2n + 2}{k + 1}.
    \end{align*}
    Finally suppose that $k = n$. Now
    \begin{align*}
        g(n + 1, n) & \leq g(n, n) + 2g(n, n - 1) + 2 = 3g(n, n - 1) + 2 \\
        & \leq 6 \tbinom{2n}{n} + 2 \leq 2 \tbinom{2n + 2}{n + 1},
    \end{align*}
    where the last inequality holds as $\binom{2n + 2}{n + 1} \binom{2n}{n}^{-1} = (4n + 2)/(n + 1) > 3$ for $n \geq 2$.
\end{proof}

\begin{proof}[Proof of \cref{MonotoneNonHomotopic}\ref{edge:all}]
    Let $G$ be an $n$-vertex multigraph that has a monotone non-homotopic drawing in which every pair of edges cross at most $k$ times, where $k < n$ are integers. By \cref{drawingtosequences}, there exists a set $\AA$ of $\abs{E(G)}$ monotone drawing sequences of length $n$, each pair of which cross at most $k$ times. Applying \cref{sequencebound} to $\AA$ gives
    \begin{equation*}
        \abs{E(G)} = \abs{\AA} \leq 2 \cdot \binom{2n}{k + 1}. \qedhere
    \end{equation*}
\end{proof}

%%%%%%%%%%%%%%%%%%%%%%%%%%%%%%%%%%%%%%%%%%%%%%%%
\section{Crossing Number Lemma for Monotone Drawings}\label{sec:crossing}

In this section we prove a tight lower bound on the number of crossings in monotone non-homotopic drawings, \cref{MonoNonHomCombinedCrossingLowerBound}, which we restate for the reader's convenience.

\MonoNonHomCombined*

The following definitions are due to \citet*{KPTU21}. A \defn{drawing style} is a collection of drawings of multigraphs. A drawing style $D$ is \defn{edge-deletion-closed} if for every drawing of a multigraph $G$ in style $D$, for every edge $e$ of $G$, the drawing of $G-e$ obtained from $\phi$ by deleting $e$ is in style $D$. 

Let $\phi$ be a drawing of a multigraph $G$, and let $v$ be a vertex of $G$. Let $B$ be a disc of positive radius centred at $\phi(v)$ containing no crossing points between edges in $G$. Consider the following operation: delete $v$ and the parts of the edges incident to $v$ within $B$, insert two vertices $v_1$ and $v_2$ in the interior of $B$, for each edge $e$ incident to $v$, join the end of $e$ at the boundary of $B$ to either $v_1$ or $v_2$, so that no edges cross within $B$. The resulting multigraph $G'$ and drawing $\phi'$ of $G'$ is a \defn{vertex split} of  $G$ at $v$. 

\begin{figure}[ht]
	\centering
	\includegraphics{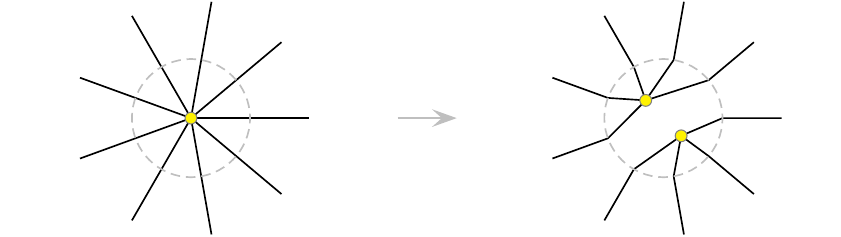}
	\caption{A vertex-split.}
\end{figure}

A drawing style $D$ is \defn{split-compatible} if for
every drawing of a multigraph $G$ in style $D$ and for every vertex split $G'$ of $G$, within the disc $B$ the edges can be embedded to obtain a drawing of $G'$ in style $D$. The \defn{planarisation} of a drawing $\phi$ of multigraph $G$ is the drawing $\phi'$ obtained from $\phi$ as follows: first make local adjustments to $\phi$ so that no three edges cross at a single point (while not changing the number of crossings), then insert a new vertex at each crossing point. We obtain a drawing $\phi'$ of a planar graph $G'$, in which each added vertex has degree 4.

The following lemma is implicitly proved by \citet*{KPTU21} (building on the work of \citet{PT20}, which employs the bisection-width method).

\begin{lemma}[\citep{KPTU21}]
\label{GeneralCrossingLemma}
Suppose $D$ is an edge-deletion-closed and split-compatible drawing style, and there are constants $c_1, c_2 > 0$ and $b > 1$ such that for every multigraph $G$:
\begin{itemize}
\item if $G$ has a drawing in style $D$ with no crossings, then $\abs{E(G)} \leq c_1\abs{V(G)}$;
\item if $G$ has a drawing in style $D$, then $\abs{E(G)} \leq c_2 \abs{V(G)}^b$.
\item if $G$ has a drawing in style $D$ with $C$ crossings, then $G$ has a drawing in style $D$ with at most $C$ crossings whose planarisation is separated. 
\end{itemize}
Then there exists a constant $\alpha=\alpha(b,c_1,c_2) > 0$ such that for
every multigraph $G$ with $\abs{E(G)}> (c_1+1)\abs{V(G)}$, 
every drawing of $G$ in style $D$ has at least $\alpha \abs{E(G)}^{2+1/(b-1)} / \abs{V(G)}^{1+1/(b-1)}$ crossings.
\end{lemma}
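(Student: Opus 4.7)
The plan is to follow the standard crossing-lemma amplification via bisection width, using the three hypotheses in complementary ways: the $c_{1} n$ bound gives a linear trivial lower bound on $X$, the $c_{2} n^{b}$ bound provides the ceiling that drives the induction, and the separated planarisation lets a small bisection of the planarisation be lifted to a small bisection of $G$.

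First I would record the trivial bound: any $D$-drawing with $m$ edges, $n$ vertices and $X$ crossings satisfies $X \geq m - c_{1} n$. By edge-deletion-closedness we may repeatedly delete an edge of a crossing pair; each such deletion strictly decreases $X$, so after at most $X$ deletions the drawing is crossing-free in $D$ and hence has at most $c_{1} n$ edges by the first hypothesis. As a warm-up one can then amplify via the standard probabilistic argument: for $p \in (0, 1]$, sample each vertex independently with probability $p$ and apply the trivial bound to the induced $D$-drawing (valid by edge-deletion-closedness); taking expectations gives $p^{4} X \geq p^{2} m - c_{1} p n$, and optimising $p$ (possible because $m > (c_{1} + 1) n$) yields $X = \Omega(m^{3}/n^{2})$. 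This already settles the lemma whenever $b \geq 2$, since then $2 + 1/(b-1) \leq 3$.

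For general $b > 1$, the main argument is by induction on $n$. Given a $D$-drawing $\phi$ of $G$ with $X$ crossings, I would first reduce to bounded maximum degree using split-compatibility (this inflates the vertex count only mildly and preserves $X$). By the third hypothesis we may assume the planarisation $G^{*}$ is separated. A Pach--T{\'o}th bisection-width bound applied to the planar graph $G^{*}$ then yields a balanced partition of $V(G^{*})$ with cut of size $O(\sqrt{\log n} \cdot \sqrt{X + n})$; separatedness allows this to be lifted to a balanced partition of $V(G)$ with a comparable number of cut original edges. Applying the inductive hypothesis to each of the two halves and combining with the $c_{2} n^{b}$ edge bound on each piece yields a recurrence whose solution is $X \geq \alpha m^{2 + 1/(b - 1)}/n^{1 + 1/(b - 1)}$ for an appropriate $\alpha = \alpha(b, c_{1}, c_{2})$.

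I expect the main obstacle to be the lift step: converting a balanced cut of $G^{*}$ into a balanced cut of $G$ with controlled cut size. The separatedness hypothesis is exactly what makes this possible --- a pair of parallel edges of $G$ that straddled the cut without a vertex witness would contradict separatedness --- but careful bookkeeping of parallel-edge lenses, incident-edge pairs at the cut, and the crossing vertices of $G^{*}$ is required. Once this translation lemma is in hand, the induction closes by a routine balancing of the $c_{2} n^{b}$ ceiling on each subpiece against the cost of the cut edges.
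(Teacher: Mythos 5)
The paper does not actually prove this lemma: it is stated with the attribution ``implicitly proved by \citet*{KPTU21} (building on the work of \citet{PT20}, which employs the bisection-width method)'', so there is no in-paper proof to compare against. Your main line of attack — bisection width on the separated planarisation, lifted back to $G$ — is the same route the paper indicates the cited proof takes, and your observation that separatedness is exactly what makes the lift from $G^{*}$ to $G$ possible is the right key idea. The trivial bound $X \geq m - c_{1}n$ via repeated edge deletion is correct as stated.

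However, your probabilistic ``warm-up'' for $b \geq 2$ has a real gap. To obtain $X = \Omega(m^{3}/n^{2})$ by vertex sampling, you must reduce the vertex count from $n$ to roughly $pn$, and that requires \emph{deleting} the non-sampled vertices, not merely their incident edges. The hypotheses of the lemma only grant edge-deletion-closedness. Deleting a vertex is not a harmless operation here: for a non-homotopic drawing, removing a vertex removes a puncture, which can make two previously non-homotopic parallel edges homotopic; for a separated drawing, removing a vertex can empty a lens. If instead you retain the non-sampled vertices as isolated points (which edge-deletion-closedness does permit), the trivial bound gives $p^{4}X \geq p^{2}m - c_{1}n$, and optimising $p$ only yields $X = \Omega(m^{2}/n)$, which is not enough. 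So the warm-up does not in fact dispose of the $b \geq 2$ case, and the bisection-width argument must carry the full load. (The same worry about which subgraphs/subdrawings remain in style $D$ also needs care in the recursive step of the bisection argument, so ``routine balancing'' is doing more work than the phrase suggests; working out that bookkeeping is precisely what the cited KPTU21 proof supplies.)
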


Let \defn{$M_k$}/\defn{$N_k$}/\defn{$O_k$} denote the collection of monotone non-homotopic multigraph drawings in which each pair of edges/incident edges/parallel edges cross at most $k$ times, respectively.

\begin{lemma}\label{Dk}
    $M_k$, $N_k$, and $O_k$ are edge-deletion-closed split-compatible drawing styles. Further if a multigraph $G$ has a drawing in style $M_k$/$N_k$/$O_k$ with $C$ crossings, then $G$ has a drawing in style $M_k$/$N_k$/$O_k$ with at most $C$ crossings whose planarisation is separated.
\end{lemma}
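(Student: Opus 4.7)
The plan is to verify the three claims in turn: edge-deletion-closedness is immediate; split-compatibility needs a careful local construction inside $B$; and the separated-planarisation claim follows by iterative rerouting. The arguments go through uniformly for $M_k$, $N_k$, and $O_k$.

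For edge-deletion-closedness: removing an edge $e$ leaves every other curve unchanged (so monotonicity persists), leaves $V(G)$ unchanged (so the homotopy class of every remaining pair of parallel edges is unchanged), and can only decrease each of the crossing counts defining $M_k$, $N_k$, or $O_k$.

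For split-compatibility, I would place $v_1$ and $v_2$ on the vertical line through $v$, one slightly above and one slightly below $v$, within a small disk $B$ containing no crossing of $\phi$, and chosen small enough that $v_1$ and $v_2$ lie in the same face as $v$ of the arrangement of all edges of $G$. Each edge incident to $v$ enters $B$ as an $x$-monotone sub-curve, which I reroute within $B$ as a short $x$-monotone curve ending at its assigned endpoint in $\set{v_1, v_2}$, using small vertical offsets to avoid any crossings inside $B$ among the reroutings. Monotonicity and the relevant $k$-crossing constraint are then immediate, since the incidence and parallel relations at $v$ are only refined by the split. For non-homotopy of a pair of parallel edges $e, f$ in $G'$: if one endpoint lies in $\set{v_1, v_2}$, the puncture set of $G'$ strictly contains that of $G$, so non-homotopy is automatic; otherwise both endpoints lie in $V(G) \setminus \set{v}$ and the punctures differ only by replacing $v$ with $\set{v_1, v_2}$, and since $v_1, v_2$ lie in the same face of $e \cup f$ as $v$, the substitution $\gamma_v \mapsto \gamma_{v_1} \gamma_{v_2}$ embeds the relevant fundamental group of $G$ as a free subgroup of that of $G'$, transferring non-triviality of the loop $e \cdot f^{-1}$.

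For the separated-planarisation claim I would iteratively reroute crossings away. Starting from any drawing $\phi$ with $C$ crossings in the given style, suppose its planarisation $\phi'$ is not separated. Then there is a pair of parallel edges $e', f'$ in $\phi'$ (sub-curves of original edges $e, f$ between consecutive planarisation vertices $p$ and $q$) bounding a lens $L$ with no vertex in its interior; the symmetric case of no vertex in the exterior is handled by swapping the roles of $e$ and $f$. Replace $e'$ with an $x$-monotone curve $\tilde e$ lying in the interior of $L$ and close to $f'$; this is possible because $e'$ and $f'$ share the $x$-range $[p_x, q_x]$ and are both $x$-monotone. The new $e$ is still $x$-monotone and has the same homotopy class as before, since $e'$ and $\tilde e$ are homotopic in the plane minus every vertex (as $L$ contains none), so no pair of parallel edges becomes homotopic. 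Since no planarisation vertex lies in the interior of $L$, no other edge enters that interior, so $\tilde e$ introduces no new crossings, while the crossings of $e$ and $f$ at $p$ and $q$ both vanish; the total crossing count strictly decreases and every per-pair $k$-crossing bound is preserved. Iterating terminates since the crossing count is a non-negative integer. The main obstacle is this final step: preserving monotonicity, non-homotopy of every parallel pair, and the relevant $k$-crossing bound while strictly reducing crossings; monotonicity is nearly automatic because $e'$ and $f'$ span the same $x$-interval, and the remaining two properties follow from the emptiness of $L$ and the fact that only $e$ is modified.
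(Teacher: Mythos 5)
Your overall plan matches the paper's: edge-deletion-closedness is immediate, split-compatibility via a local re-embedding in $B$, and the separated-planarisation property via an iterated rerouting that strictly decreases the crossing count. However, both of your constructive steps contain genuine gaps.

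The rerouting step is the more serious problem. You replace only $e'$ by a curve $\tilde e$ lying in the \emph{interior} of $L$ close to $f'$, and claim the crossings at $p$ and $q$ vanish. They do not. Just before $p$ and just after $q$ the original edge $e$ lies on the opposite side of $f$ from the lens $L$, whereas $\tilde e$ lies on the $L$-side of $f$; the pieced-together curve therefore still passes from one side of $f$ to the other at (or arbitrarily near) both $p$ and $q$. Concretely, if $f$ is the $x$-axis and $e$ dips below it for $x<0$, runs above it for $0<x<2$, and dips below again for $x>2$ (so $p=(0,0)$, $q=(2,0)$, $L$ is the region between them), then any $\tilde e$ in the interior of $L$ is strictly positive on $(0,2)$ while the retained parts of $e$ are strictly negative just outside $[0,2]$, so the new $e$ still crosses $f$ at $p$ and at $q$. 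The crossing count does not decrease and your iteration need not terminate. The paper removes the two crossings by a symmetric swap: it redraws \emph{both} $e$ and $f$ inside the lens, with the new $e$ running along (just inside) the old arc of $f$ and the new $f$ running along (just inside) the old arc of $e$, so the two curves stay on a fixed side of each other throughout and the crossings at $p,q$ genuinely disappear. (The paper also works with an inclusion-minimal lens of the original drawing rather than an empty lens of the planarisation, tracking transit edges explicitly; your use of the planarisation to obtain an empty lens is a legitimate simplification of that part, but does not repair the one-sided reroute.)

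The split-compatibility step also has a gap, though a more repairable one. You fix $v_1,v_2$ on the vertical line through $v$ and claim the reattachments can be made crossing-free with small vertical offsets. But a vertex split assigns the edge-ends to $v_1$ and $v_2$ along an \emph{arbitrary} partition of $\partial B$ into two contiguous arcs, and these need not be the upper and lower semicircles. For example, if $v_1$ receives exactly the ends lying in a short sub-arc of the left (incoming) side, then $v_2$ has incoming ends both above and below all of $v_1$'s. With $v_1$ and $v_2$ on the same vertical line, any $x$-monotone edge from a boundary point above $v_1$'s arc to $v_2$ (which lies below $v_1$) must, by the intermediate value theorem, cross one of $v_1$'s edges, so your offsets cannot avoid crossings. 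The placement of $v_1,v_2$ has to depend on the split; the paper argues via the cyclic structure of the $E^\pm$ blocks around $\partial B$ and places the two new vertices accordingly (in the example above one would put $v_1$ horizontally offset towards the left boundary, not on the vertical through $v$).
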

 
\begin{proof}
    (The proof of this result is very similar to the proof of an analogous result of \citet{KPTU21}.)\ 
    It is immediate that $M_k$, $N_k$, and $O_k$ are edge-deletion-closed. 
    We now show that they are all split-compatible. 
    Let $\phi$ be a drawing of a multigraph $G$ in style $M_k$, $N_k$, or $O_k$. Orient each edge of $G$ left-to-right. Let $v$ be a vertex of $G$. Let $E^-(v)$ and $E^+(v)$ be the sets of incoming and outgoing edges incident to $v$. Let $B$ be a disc of positive radius centred at $\phi(v)$ containing no crossing points between edges in $G$. As illustrated in \cref{SplitMonotone}, let $G'$ be a multigraph obtained by splitting $G$ at $v$ within disc $B$, where $v$ is replaced by $v_1$ and $v_2$. Consider each edge of $G'$ to inherit the orientation of the corresponding edge in $G$. Let $E_i$ be the set of edges of $G'$ incident to $v_i$. By assumption, no two edges in $E^-(v_1)\cup E^+(v_1)\cup E^+(v_2)\cup E^-(v_2)$ cross within $B$. Thus, the edges incident with $v_1$ or $v_2$ appear as $E^-(v_1),E^+(v_1),E^+(v_2),E^-(v_2)$ in the cyclic ordering determined by where the edges cross the boundary of $B$. Hence, the edges incident to $v_1$ or $v_2$ can be drawn within $B$ so that no two edges cross, each edge is x-monotone, and the resulting drawing of $G'$ is non-homotopic. Since each pair of parallel/incident edges in the new drawing were parallel/incident in the original, the styles $M_k$, $N_k$, and $O_k$ are all split-compatible.

    \begin{figure}[ht]
        \centering
        \includegraphics{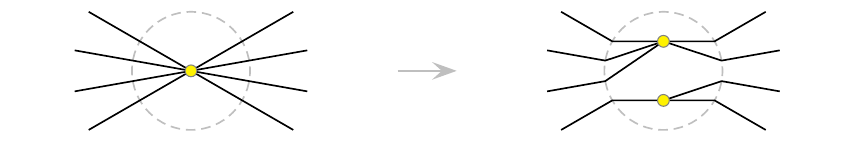}
        \caption{Splitting a vertex in a monotone drawing.}
        \label{SplitMonotone}
    \end{figure}

    Consider a drawing of a multigraph $G$ in style $M_k$/$N_k$/$O_k$ with $C$ crossings. By local adjustments, we may assume that no three edges cross at a single point (while maintaining monotonicity and not creating any crossings). We now show that $G$ has a drawing in the same style with at most $C$ crossings whose planarisation is separated. 
    
    Consider the drawing of $G$ in style $M_k$/$N_k$/$O_k$ with the smallest number of crossings. This minimal number is some $c \leq C$.
    Suppose there is a simple closed curve $\gamma$ formed by portions of two edges $e_1$ and $e_2$, where there is no vertex of $G$ in the interior of $\gamma$. Assume  that the interior of $\gamma$ is inclusion-minimal among all such curves. This implies that every time an edge enters $\gamma$ by crossing $e_1$, it must leave $\gamma$ by crossing $e_2$ and vice versa.
    
    As illustrated in \cref{RerouteMonotone}, redraw the diagram replacing $e_1$ with $e_1'$ which is the same as $e_1$ outside $\gamma$ and is $e_2 \cap \gamma$ on $\gamma$ and replacing $e_2$ with $e_2'$ which is the same as $e_2$ outside $\gamma$ and is very close to $e_1 \cap \gamma$ inside $\gamma$.
    \begin{figure}[ht]
        \centering
        \includegraphics{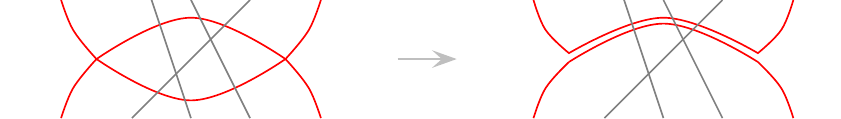}
        \caption{Rerouting a monotone drawing.}
        \label{RerouteMonotone}
    \end{figure}
    By the observation about edges entering and leaving $\gamma$, any crossing between an edge $e$ and $e_1'$ corresponds to a crossing between $e$ and $e_1$ and similarly for $e_2$. Thus the number of crossings between pairs of parallel edges/incident edges/edge in the resulting drawing is at most $k$. Furthermore, the resulting drawing has less than $c$ crossings (a crossing between $e_1$ and $e_2$ has been removed), is x-monotone, and is non-homotopic (since no vertex is in the interior of $\gamma$). 
    This contradiction shows that every simple closed curve $\gamma$ formed by portions of two edges of $G$ has a vertex of $G$ in its interior. A similar proof shows that $\gamma$ has a vertex of $G$ in its exterior. Therefore, the planarisation of this drawing is separated.
\end{proof}

\Cref{Dk} shows that \cref{GeneralCrossingLemma} is applicable to drawing style $M_k$/$N_k$/$O_k$. \Cref{MonotoneNonHomotopic}\ref{edge:all} with $k = 0$ shows that we may take $c_1 = 4$ for all three styles. \Cref{MonotoneNonHomotopic}\ref{edge:all} shows we may take $b = k + 1$ and $c_2 = \frac{2^{k + 2}}{(k + 1)!}$ for $M_k$. \Cref{MonotoneNonHomotopic}\ref{edge:incident} shows we may take $b = k + 2$ and $c_2 = \frac{2^{k + 1}}{(k + 1)!}$ for $N_k$. \Cref{MonotoneNonHomotopic}\ref{edge:parallel} shows we may take $b = k + 2$ and $c_2 = \frac{2^k}{(k + 1)!}$ for $O_k$.
All three parts of \cref{MonoNonHomCombinedCrossingLowerBound} immediately follow.

\section{Open Problems} \label{sec:conc}

Let $h(n, k)$ denote the maximum number of edges in an $n$-vertex multigraph that has a non-homotopic $k$-crossing drawing. \Cref{NonHomotopic} establishes the upper bound $h(n, k) \leqslant 2^{\OO(nk)}$. For lower bounds, \eqref{eq:PTTLB} gives $h(n, k) \geqslant 2^{\Omega(\sqrt{nk})}$ while the graphs $G_{n, k}$ from \cref{sec:MonotoneExample} give $h(n, k) \geqslant \sum_{\ell = 1}^k \binom{n}{\ell + 1}$. What is the true growth rate of $h(n, k)$?

In the monotone setting we have established the correct growth rate in terms of $n$. Let $h_\text{mon}(n, k)$ denote the maximum number of edges in a monotone non-homotopic $k$-crossing drawing of an $n$-vertex multigraph. \cref{MonotoneNonHomotopic}\ref{edge:all} and the graphs $G_{n, k}$ give $\sum_{\ell = 1}^k \binom{n}{\ell + 1} \leqslant h_\text{mon}(n, k) \leqslant 2 \cdot \binom{2n}{k + 1}$. Hence, $h_{\text{mon}} = \Theta_k(n^{k + 1})$ but the growth rate in terms of $k$ is unclear. It seems likely that the lower bound is closer to the truth. Is there some absolute constant $c$ such that $h_{\text{mon}}(n, k) \leqslant c \sum_{\ell = 0}^k \binom{n}{\ell + 1}$?

\subsection*{Acknowledgements} 
Many thanks to Torsten Ueckerdt for helpful conversations regarding \cref{GeneralCrossingLemma}. Thanks also to the referees for instructive feedback, especially for highlighting the results of \citet{Przytycki15}.

%%%%%%%%%%%%%%%%%%%%%%%%%%%%%%%%%%%%
\fontsize{11pt}{12pt}
\selectfont
\def\soft#1{\leavevmode\setbox0=\hbox{h}\dimen7=\ht0\advance \dimen7 by-1ex\relax\if t#1\relax\rlap{\raise.6\dimen7 \hbox{\kern.3ex\char'47}}#1\relax\else\if T#1\relax \rlap{\raise.5\dimen7\hbox{\kern1.3ex\char'47}}#1\relax \else\if d#1\relax\rlap{\raise.5\dimen7\hbox{\kern.9ex \char'47}}#1\relax\else\if D#1\relax\rlap{\raise.5\dimen7 \hbox{\kern1.4ex\char'47}}#1\relax\else\if l#1\relax \rlap{\raise.5\dimen7\hbox{\kern.4ex\char'47}}#1\relax \else\if L#1\relax\rlap{\raise.5\dimen7\hbox{\kern.7ex \char'47}}#1\relax\else\message{accent \string\soft \space #1 not defined!}#1\relax\fi\fi\fi\fi\fi\fi}

\end{document}